\theoremstyle{definition}
\newtheorem{defi}{Определение}
\theoremstyle{plain}
\newtheorem{stat}{Утверждение}
\newtheorem{theo}{Теорема}
\newtheorem{lemm}{Лемма}
\newtheorem*{lemma}{Лемма 7$'$}
\newtheorem*{explain}{Описание метода}
\theoremstyle{remark}
\newtheorem{rem}{Замечание}
\begin{document}
\include{title}
\voffset=-25mm

\begin{center}
  \Large \textbf{Quantitative generalizations 
of Niederreiter's result concerning continuants}\par\medskip
%  \textbf{╨Ъ╤А╨╛╤В╨║╨╛╨▓╨░ ╨Э.╨Р., ╨Ъ╨░╨╜ ╨Ш.╨Ф.}
  \end{center}
  \vskip+1.0cm
  \centerline{\bf I.D. Kan, N.A. Krotkova\footnote{research is supported by RFBR grant no. 09-01-00371a}
}
\vskip+1.0cm

The main part of the present paper is written in Russian.
Here we give a  brief summary in English.

We give certain generalization of Niederreiter's result from
\cite{Nie}
concerning famous Zaremba's conjecture on existence of rational numbers with bounded partial quotients.

Let $a, m, s \in \mathbb{Z}_+, a\ge 2$.
We consider a
sequence of integers $a_1,\ldots,a_n$  and the  continuant $\langle a_1,\ldots,a_n\rangle$.
Suppose that $${a_i< N, i=1,\ldots,n, N\in \mathbb{Z}_+}.$$
By $f(a^m, N)=f_m$ 
we define the number of such  
sequences
with elements  bounded by $N$ and such that $$ \langle a_1,\ldots,a_n\rangle = a^m.$$
Define  polynomials $P_s=P_s(\lambda)$ in the following way:
  $$
  P_s(\lambda)=\left\{
      \begin{aligned}
      &{\lambda}^{3} - s{\lambda}^2 - s\lambda - s,
      %=0
      \quad  &\text{if } s\equiv 0 (\mod \text{ }8), s\ge 6 \quad &\text{(1)} \\
      &{\lambda}^{3} - \left(s + 2\right) {\lambda}^2 + \left(s + 2\right) \lambda - \left(s-2\right),
      %=0
      \quad  &\text{if }       s \equiv2 (\mod \text{ }8), s\ge 6 \quad  &\text{(2)}\\
      &{\lambda}^{3} - \left(s+ 2\right) {\lambda}^2 + s\lambda + \left(s + 4\right),
      %=0
      \quad  &\text{if } s \equiv4 \left(\mod \text{ }8\right), s\ge 6 \quad &\text{(3)}\\
      &{\lambda}^{3} - s{\lambda}^2 - \left(s + 2\right) \lambda + \left(s + 2\right),
      %=0
      \quad  &\text{if } s \equiv 6 (\mod \text{ }8), s\ge 6 \quad &\text{(4)}\\
      &{\lambda}^2 - 4\lambda -4,
      %=0
      \quad &\text{if } s=4 \quad &\text{(5)}\\
      &\lambda - 2
      ,
      %=0
      \quad  &\text{if }  s=2 \quad &\text{(6)}\\
      &\lambda - s - 1,
      \quad &\text{if } s\equiv 1 (\mod \; 2), s\ge 3\quad &\text{(7)}.
      \end{aligned}
      \right.
  $$

  {\bf Theorem 1.}\,\,{\it 
For any given $a>1$ and $s>1$ for all $m$  one has
  $$
  f(a^m, a^s)\ge \lceil C_1(s)m^{\log_2 {\lambda}}\rceil,
  $$
Here  $C_1 (s)$ depends on  $s$ only and
  $$
  C_1\gg s^{-\log_2 (s+1) - 3},
  $$
  where  the constant in the sign $\gg $ is  absolute and  $\lambda$ 
is the largest roolt of the polynomial $P_s(\lambda).$
  %╨Я╤А╨╕ $m \rightarrow \infty$ ╨╕ $N= a^s, s\in \mathbb{N}$ %s=2k, k\in \mathbb{N}$
  % ╤Б╨┐╤А╨░╨▓╨╡╨┤╨╗╨╕╨▓╤Л ╤Б╨╗╨╡╨┤╤Г╤О╤Й╨╕╨╡ ╨╛╤Ж╨╡╨╜╨║╨╕ ╨┤╨╗╤П $f(a^m, a^s-1)$:
  %\begin{itemize}
  %\item[(\textit{i})]╨┐╤А╨╕ ╤З╨╡╤В╨╜╨╛╨╝ $s \qquad f(a^m, a^s-1)\ge C_1(s)m^{1+\log_{2}\lambda},\quad C_1(s)\gg %2^{-\log^{2}_{2} s(1 + \bar{\bar{o}}(1)) }
      %\min(\frac{1}{12},
  %    \min \left(\frac{1}{26},(s)^{-\log_{2} (s+1) - 2}\right)
      %)
   %   $,
    %  $\lambda$ - ╨╜╨░╨╕╨▒╨╛╨╗╤М╤И╨╕╨╣ ╨╕╨╖ ╨┤╨╡╨╣╤Б╤В╨▓╨╕╤В╨╡╨╗╤М╨╜╤Л╤Е ╨║╨╛╤А╨╜╨╡╨╣ ╨╝╨╜╨╛╨│╨╛╤З╨╗╨╡╨╜╨░
      %$$
     %
      %\end{aligned}
      %$$
      %\newpage}
    }

Here we note that  
for the largest roots 
${\lambda}_1, {\lambda}_2, {\lambda}_3, {\lambda}_4$ 
of the polynomials $(1)-(4)$ one has
      $$
      \begin{aligned}
      &{\lambda}_1=s + 1 +\frac{{\theta}_1}{s^2},\qquad \text{ where } {\theta}_1 \in (-1, 0);\\
      &{\lambda}_2=s + 1 -\frac{3}{s^2} + \frac{3}{s^3} + \frac{{\theta}_2}{s^4},\qquad \text{ where } {\theta}_2 \in (0, 3);\\
      &{\lambda}_3=s +1 -\frac{3}{s^2} + \frac{3}{s^3} -\frac{{\theta}_3}{s^4},\qquad \text{ where } {\theta}_3 \in (-9,-6);\\
      &{\lambda}_4=s + 1 +\frac{{\theta}_4}{s^2}, \qquad \text{ where } {\theta}_4 \in (-1, 0).
      \end{aligned}
      $$

  %\item[(\textit{ii})] ╨┐╤А╨╕ $s=2$
 % \item[(\textit{ii})] ╨┐╤А╨╕ ╨╜╨╡╤З╨╡╤В╨╜╨╛╨╝ $s\ge 3 \qquad f(a^m, a^s-1)\ge C_2(s)m^{\log_{2}(s+1)},\qquad C_2(s)\gg (s+1)^{-3-\log_{2} (s+1)}%(1 + \bar{\bar{o}}(1)) }
   %   $
  %\end{itemize}

 {\bf Theorem 2.}\,\,{\it
There exists $s_0\in\mathbb{N}$
such that for $N\ge a^{s_0}$ and for  $m$
large enough one has
  $$
  f(a^m,N)\ge \lceil C_2(N)m^{\log_{2}\lambda} \rceil, \qquad C_2( N)\gg 2^{-5\log^2_2\log_2 N},
  $$
where   $\lambda$
is the largest real root of
 -of the polynomial  $(3)$ with $s=\lfloor \log_a N  \rfloor.$
  }

  We note that for
   $s=6$  the polynomial (4) is a characteristic polynomial for the matrix $2A$ where
  $$
  A=\begin{pmatrix}
  2&1&1\\
  2&0&1\\
  1&1&1
  \end{pmatrix}
  .$$

  {\bf Theorem 3.}\,\,{\it 
  For $m$ large enough and  $s=6$ one has the following improvement of the bound
from Theorem 1:
  $$
  f(a^m, a^6)\ge \left\lceil Cm^{1 + \frac{1}{5}\log_{2}\mu}\right\rceil \asymp m^{\log_{2}{2\sqrt[5]{\mu}}},
  $$
  here $C>0, \mu$ is the largest eigenvalue of the matrix
  $$
  B={\begin{pmatrix}
  2&1&1\\
  2&0&1\\
  1&1&1\\
  \end{pmatrix}}^5 + {\begin{pmatrix}
  0&1&-1\\
  0&1&-1\\
  0&1&-1
  \end{pmatrix}}
  \eqno (8)
  $$
  }

  We note that for  $s=6$ one has
  $$
  2\sqrt[5]{\mu}-\lambda > 0,0000756,
  $$
where  $\lambda$ is the lagrest eigenvalue of the matrix $2A$,
and 
   $\mu$ is the largest eigenvalue of $B.$

  {\bf Theorem 4.}\,\,{\it For  $m\ge 8$ one has
  $f(3^m, 4)\ge \left\lceil\frac{m+1}{4}\right\rceil$.
  }

{\bf Theorem 5.}\,\,{\it 
Let  $k\in \mathbb{Z}_+, k\ge 2 $. Then
  $f(2^{2^k-1},3) \ge 2^{k}$
 }

 \newpage

%\author{Кроткова Наталья, Кан Игорь}

%\title {Количественные обобщения результатов Нидеррейтера о цепных дробях.}
%\date{}
%\maketitle

  \begin{center}
  \Large \textbf{Количественные обобщения результатов Нидеррейтера о цепных дробях.}\par\medskip
%  \textbf{Кроткова Н.А., Кан И.Д.}
  \end{center}
  \vskip+1.0cm
  \centerline{\bf И.Д. Кан, Н.А. Кроткова
\footnote{Работа выполнена при поддержке РФФИ, грант №
09-01-00371а}
  }
\vskip+1.0cm
\begin{small}
\centerline{\bf Аннотация}
  Пусть $d$ - натуральное число $\ge 2$. Гипотеза Зарембы гласит, что существует
  ${c\in\{1,2,\ldots,d-1\}}$, где $(c,d) =1 $ и разложение $\frac{c}{d}$ в цепную дробь имеет все
  неполные частные меньшие константы $N$. Предполагается, что $N=6$, для простых $d$ предполагается $N=4$.
  В 1986 году Нидеррейтер доказал справедливость гипотезы Зарембы с $N=4$ для $d$, являющихся степенями 2 и 3
  (см. \cite{Nie}), а также для $d$, являющихся степенями 5 с $N=5$.
  %Позднее были получены результаты о ее справедливости при $d$ имеющем иной специальный вид.
В настоящей работе будут получены количественные обобщения
результатов Нидеррейтера, а именно, оценка снизу количества
последовательностей c ограниченными элементами, континуант которых
является степенью произвольного натурального числа $a\ge 2$ .
Также будет показана неулучшаемость некоторых оценок в рамках
рассматриваемого метода.
\end{small}

  \section{Введение.}

  Пусть числа $c, d\in\mathbb N$ таковы, что $(c,d) =1, \text{ где } 1\le c \le d-1 $. Тогда $\frac{c}{d}$ представима в виде цепной дроби
  $$\frac{c}{d}=\cfrac{1}{a_1+\cfrac{1}{a_2 + \cfrac{1}{\ldots + \cfrac{1}{a_n}}}} = [0;a_1,\ldots,a_n]=[a_1,\ldots,a_n],
  $$
  где $a_1,\ldots,a_n$ - неполные частные (также называемые элементами цепной дроби) и $ a_i\in\mathbb N, i=1,\ldots,n. $
  %В дальнейшем рассматриваются только дроби $\frac{c}{d}$, удовлетворяющие условию $a_n\neq 1$.

  Знаменатель конечной цепной дроби $[a_1,\ldots,a_n]=[\textbf{u}]$ - это функция от последовательности
  $\textbf{u}=(a_1,\ldots,a_n)$. Это число называется  {\slshapeконтинуатом} $\textbf{u}$ и обозначается
  $\langle a_1,\ldots,a_n\rangle$ или $\langle \textbf{u}\rangle$. Для каждой последовательности
  $\textbf{u}\in{\mathbb N}^{n}$ определим $\textbf{u\_}=(a_2,\ldots,a_n)$ и
  $\textbf{u}^{\textbf{---}}=(a_1,\ldots,a_{n-1})$. Будем также писать $\{ \textbf{u} \}$
  для последовательности $(a_n,\ldots,a_1)$. Для пустого континуанта $\textbf{u}$ положим
  ${[\textbf{u}]=[\{\textbf{u}\}]=0, \langle \textbf{u} \rangle=1}$.

\begin{stat}{(см. [1])}
Пусть ${\textbf{u}=(u_1,\ldots,u_n), n\ge 2}$ - последовательность натуральных чисел. Тогда:
\begin{itemize}
\item[(1)]
$$ \langle\textbf{u}\rangle =\begin{vmatrix}
u_1 & 1 & 0 & \hdotsfor{1} & 0 \\ -1 & u_2 & 1 & 0 & \hdotsfor{1} \\ 0 & -1 & u_3 & 1 & \hdotsfor{1} \\
 \hdotsfor{5}  \\
0 & 0 & \ldots & -1 & u_n
 \end{vmatrix}
$$\newpage
\item[(2)] $\langle u_1,\ldots, u_n\rangle = \langle u_n,\ldots, u_1 \rangle$
~\item[(3)] если $u_2\neq 1$, то $\langle 1, u_2 - 1,u_3, \ldots, u_n\rangle = \langle u_2,\ldots, u_n\rangle$
\end{itemize}
\end{stat}

  Гипотеза Зарембы гласит, что для любого $d\ge 2$ существует дробь $\frac{c}{d}$,
  все неполные частные которой строго меньше $N$. Предполагается, что $N=6$. В 1986 году
  Нидеррейтер показал, что гипотеза верна для чисел $d$, являющихся степенями 2 и 3 с ограничением
  $N=4$, а также для чисел $d$ являющихся степенями 5 с ограничением $N=5$ (см. \cite{Nie}).
  В 2002 году справедливость была доказана для $d$ - степени 6 с $N=6$ \cite {jap1}, в 2005
  году был получен положительный результат для $d$, имеющего вид $7^{k\cdot 2^{n}}$, где
  $k=1, 3, 5, 7, 9, 11$ и $N=4$ \cite{jap2}.
  В настоящей статье будет получена оценка снизу количества последовательностей натуральных чисел $(a_1,\ldots,a_n)$ с ограниченными элементами и континуантом, равным $a^m, где a, m \in \mathbb N, a\ge 2, n - \text{ не фиксировано}$, из которой очевидно следует оценка снизу количества цепных дробей с ограниченными неполными частными и знаменателем, равным $a^m$. Общий результат сформулирован ниже.

  \section{Основные результаты.}

  Пусть $a, m, s \in \mathbb{N}, a\ge 2$. Будем рассматривать последовательности натуральных чисел $a_1,\ldots,a_n$ произвольной длины $n$, континуант которых $\langle a_1,\ldots,a_n\rangle$ равен
  %$a^m$
  некоторой $m$-ой степени числа $a$,
  причем для элементов последовательностей выполнено:\\${a_i< N, i=1,\ldots,n, N\in \mathbb{N}}$.

  Пусть $f(a^m, N)=f_m$ - количество последовательностей указанного вида, континуант которых равен $~a^m$.
  Определим также многочлен $P_s=P_s(\lambda)$ при различных значениях $s$:
  $$
  P_s(\lambda)=\left\{
      \begin{aligned}
      &{\lambda}^{3} - s{\lambda}^2 - s\lambda - s,
      %=0
      \quad  &\text{при } s\equiv 0 (\mod \text{ }8), s\ge 6 \quad &\text{(1)} \\
      &{\lambda}^{3} - \left(s + 2\right) {\lambda}^2 + \left(s + 2\right) \lambda - \left(s-2\right),
      %=0
      \quad  &\text{при }       s \equiv2 (\mod \text{ }8), s\ge 6 \quad  &\text{(2)}\\
      &{\lambda}^{3} - \left(s+ 2\right) {\lambda}^2 + s\lambda + \left(s + 4\right),
      %=0
      \quad  &\text{при } s \equiv4 \left(\mod \text{ }8\right), s\ge 6 \quad &\text{(3)}\\
      &{\lambda}^{3} - s{\lambda}^2 - \left(s + 2\right) \lambda + \left(s + 2\right),
      %=0
      \quad  &\text{при } s \equiv 6 (\mod \text{ }8), s\ge 6 \quad &\text{(4)}\\
      &{\lambda}^2 - 4\lambda -4,
      %=0
      \quad &\text{при } s=4 \quad &\text{(5)}\\
      &\lambda - 2
      ,
      %=0
      \quad  &\text{при }  s=2 \quad &\text{(6)}\\
      &\lambda - s - 1,
      \quad &\text{при } s\equiv 1 (\mod \; 2), s\ge 3\quad &\text{(7)}.
      \end{aligned}
      \right.
  $$

  \begin{theo}
  Для любых натуральных фиксированных $a$ и $s$, не равных 1, для всех достаточно больших $m$ справедлива оценка
  $$
  f(a^m, a^s)\ge \lceil C_1(s)m^{\log_2 {\lambda}}\rceil,
  $$
  растущая степенным образом по $m$, \\где положительное число $C_1 (s)$ зависит только от $s$,
  $$
  C_1\gg s^{-\log_2 (s+1) - 3},
  $$
  константа в знаке "$\gg $"$\;$абсолютная, $\lambda$ - наибольший из действительных корней многочлена $P_s(\lambda).$
  %При $m \rightarrow \infty$ и $N= a^s, s\in \mathbb{N}$ %s=2k, k\in \mathbb{N}$
  % справедливы следующие оценки для $f(a^m, a^s-1)$:
  %\begin{itemize}
  %\item[(\textit{i})]при четном $s \qquad f(a^m, a^s-1)\ge C_1(s)m^{1+\log_{2}\lambda},\quad C_1(s)\gg %2^{-\log^{2}_{2} s(1 + \bar{\bar{o}}(1)) }
      %\min(\frac{1}{12},
  %    \min \left(\frac{1}{26},(s)^{-\log_{2} (s+1) - 2}\right)
      %)
   %   $,
    %  $\lambda$ - наибольший из действительных корней многочлена
      %$$
     %
      %\end{aligned}
      %$$
      %\newpage
      \begin{rem}
      \begin{flushleft} Пусть ${\lambda}_1, {\lambda}_2, {\lambda}_3, {\lambda}_4$ - наибольшие из действительных корней многочленов $(1)-(4)$ соответственно. Тогда:\end{flushleft}
      $$
      \begin{aligned}
      &{\lambda}_1=s + 1 +\frac{{\theta}_1}{s^2},\qquad \text{ где } {\theta}_1 \in (-1, 0);\\
      &{\lambda}_2=s + 1 -\frac{3}{s^2} + \frac{3}{s^3} + \frac{{\theta}_2}{s^4},\qquad \text{ где } {\theta}_2 \in (0, 3);\\
      &{\lambda}_3=s +1 -\frac{3}{s^2} + \frac{3}{s^3} -\frac{{\theta}_3}{s^4},\qquad \text{ где } {\theta}_3 \in (-9,-6);\\
      &{\lambda}_4=s + 1 +\frac{{\theta}_4}{s^2}, \qquad \text{ где } {\theta}_4 \in (-1, 0).
      \end{aligned}
      $$
      \end{rem}

  %\item[(\textit{ii})] при $s=2$
 % \item[(\textit{ii})] при нечетном $s\ge 3 \qquad f(a^m, a^s-1)\ge C_2(s)m^{\log_{2}(s+1)},\qquad C_2(s)\gg (s+1)^{-3-\log_{2} (s+1)}%(1 + \bar{\bar{o}}(1)) }
   %   $
  %\end{itemize}
  \end{theo}

  \begin{theo}
  Существует $s_0\in\mathbb{N}$, такое что при $N\ge a^{s_0}$, для достаточно больших $m$ справедлива оценка (растущая по $m$):
  $$
  f(a^m,N)\ge \lceil C_2(N)m^{\log_{2}\lambda} \rceil, \qquad C_2( N)\gg 2^{-5\log^2_2\log_2 N},
  $$
  $\lambda$ - наибольший из действительных корней многочлена $(3)$ при $s=\lfloor \log_a N  \rfloor.$
  \end{theo}

  \begin{rem}
  При $s=6$ многочлен (4) является характеристическим для матрицы $2A$, где
  $$
  A=\begin{pmatrix}
  2&1&1\\
  2&0&1\\
  1&1&1
  \end{pmatrix}
  $$
  \end{rem}

  \begin{theo}
  При достаточно большом $m$ и $s=6$ теорема 1 допускает улучшение. Справедлива оценка:
  $$
  f(a^m, a^6)\ge \left\lceil Cm^{1 + \frac{1}{5}\log_{2}\mu}\right\rceil \asymp m^{\log_{2}{2\sqrt[5]{\mu}}},
  $$
  где $C>0, \mu$ - наибольшее из действительных собственных значений
  матрицы
  $$
  B={\begin{pmatrix}
  2&1&1\\
  2&0&1\\
  1&1&1\\
  \end{pmatrix}}^5 + {\begin{pmatrix}
  0&1&-1\\
  0&1&-1\\
  0&1&-1
  \end{pmatrix}}
  \eqno (8)
  $$
  \end{theo}

  \begin{rem}
  При $s=6$ выполняется следующее неравенство:
  $$
  2\sqrt[5]{\mu}-\lambda > 0,0000756,
  $$
  где $\lambda$ - наибольшее из действительных собственных значений матрицы $2A$,

   $\text{ }\mu$ - наибольшее из действительных собственных значений матрицы $B.$
  \end{rem}

  \begin{theo}
  При $m\ge 8$ справедлива оценка:
  $f(3^m, 4)\ge \left\lceil\frac{m+1}{4}\right\rceil$.
  \end{theo}

  \begin{theo}
  Пусть  $k\in \mathbb{N}, k\ge 2 $, тогда справедлива оценка:
  $f(2^{2^k-1},3) \ge 2^{k}$
 \end{theo}

 \begin{rem}
 Утверждения теорем остаются верными, если вместо последовательностей рассматривать цепные дроби с ограниченными элементами.
 \end{rem}

 \section{Описание метода, основные леммы.}

 \begin{lemm}\cite{hensley}
 Пусть $b\in \mathbb{N}, b>1$, $\textbf{u}=(u_1,\ldots,u_n)$  - последовательность натуральных чисел, причем $u_n>1$.
 Положим  \begin{align*} \textbf{w}=&(u_1,\ldots,u_{n-1},u_n-1,1, b-1, u_n,\ldots,u_1),\\ \textbf{w}\prime=&(u_1,\ldots,u_{n-1},u_n-1,u_n+1,u_{n-1},\ldots,u_1).\end{align*}. Тогда $\langle \textbf{w}\rangle=b{\langle \textbf{u}\rangle}^2, \langle \textbf{w}\prime \rangle={\langle \textbf{u}\rangle}^2.$
 \end{lemm}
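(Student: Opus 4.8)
The plan is to prove both identities by pure continuant algebra, using only facts recorded in (or immediate from) Statement~1 together with the hypotheses $u_n>1$ and $b>1$. Three ingredients suffice: the two one-step recurrences
$$
\langle u_1,\ldots,u_n\rangle = u_1\langle u_2,\ldots,u_n\rangle + \langle u_3,\ldots,u_n\rangle = u_n\langle u_1,\ldots,u_{n-1}\rangle + \langle u_1,\ldots,u_{n-2}\rangle ;
$$
the reversal symmetry $\langle u_1,\ldots,u_n\rangle = \langle u_n,\ldots,u_1\rangle$ of Statement~1(2); and the gluing identity
$$
\langle a_1,\ldots,a_k,b_1,\ldots,b_\ell\rangle = \langle a_1,\ldots,a_k\rangle\langle b_1,\ldots,b_\ell\rangle + \langle a_1,\ldots,a_{k-1}\rangle\langle b_2,\ldots,b_\ell\rangle ,
$$
which is immediate once one writes a continuant as the top-left entry of the product of the matrices $\left(\begin{smallmatrix}a_i&1\\1&0\end{smallmatrix}\right)$ (a form equivalent to Statement~1(1)) and splits that product after the $k$-th factor; an empty index range is read here as continuant $1$, the convention fixed before Statement~1, so the identity stays valid when $k=1$ or $\ell=1$.

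Next I would record two consequences of the back recurrence that will be used repeatedly: a trailing $1$ can be absorbed, $\langle v_1,\ldots,v_{j-1},v_j,1\rangle = \langle v_1,\ldots,v_{j-1},v_j+1\rangle$; and a continuant is affine in its last entry, so in particular $\langle u_1,\ldots,u_{n-1},u_n-1\rangle = \langle\textbf{u}\rangle-\langle u_1,\ldots,u_{n-1}\rangle$. Abbreviate $Q=\langle\textbf{u}\rangle$ and $R=\langle\textbf{u}^{\textbf{---}}\rangle=\langle u_1,\ldots,u_{n-1}\rangle$.

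For $\langle\textbf{w}\rangle$ I would split $\textbf{w}$ just after the inserted $1$, into the blocks $(u_1,\ldots,u_{n-1},u_n-1,1)$ and $(b-1,u_n,u_{n-1},\ldots,u_1)$, and apply the gluing identity. By the absorption rule the first block has continuant $Q$, and dropping its last entry gives $Q-R$; by reversal the second block has continuant $\langle u_1,\ldots,u_n,b-1\rangle=(b-1)Q+R$, and dropping its first entry leaves $\langle u_n,\ldots,u_1\rangle=Q$. Substituting, $\langle\textbf{w}\rangle = Q\bigl((b-1)Q+R\bigr)+(Q-R)Q$, in which the $QR$ terms cancel, leaving $bQ^2$. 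For $\langle\textbf{w}'\rangle$ I would split $\textbf{w}'$ just after the entry $u_n-1$, into $(u_1,\ldots,u_{n-1},u_n-1)$ and $(u_n+1,u_{n-1},\ldots,u_1)$; the four continuants the gluing identity asks for are $Q-R$, then $R$, then $\langle u_1,\ldots,u_{n-1},u_n+1\rangle=Q+R$ (reversal and affineness again), then $R$, so $\langle\textbf{w}'\rangle=(Q-R)(Q+R)+R^2=Q^2$. Finally I would note that $\textbf{w}$ and $\textbf{w}'$ really are sequences of positive integers exactly because $u_n>1$ and $b>1$ --- the only entries that could fall below $1$ being $u_n-1$ and $b-1$ --- and that the boundary case $n=1$ is covered by the empty-continuant convention used above.

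I do not expect a genuine obstacle: the whole argument is bookkeeping with the gluing identity. The one spot that demands care is tracking precisely which truncated or shifted sub-continuant the gluing identity produces in each summand (it is easy to be off by one segment) and then verifying that the cancellations are exactly the ones claimed; a minor secondary point is confirming that the short cases ($n=1$, empty sub-blocks) are consistent with the empty-sequence convention, which they are.
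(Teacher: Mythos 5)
Your proof is correct. Note that the paper itself gives no argument for this lemma at all --- it is simply imported from Hensley's book by citation --- so there is nothing in the text to compare your route against. Your derivation via the gluing (Euler) identity $\langle a_1,\ldots,a_k,b_1,\ldots,b_\ell\rangle=\langle a_1,\ldots,a_k\rangle\langle b_1,\ldots,b_\ell\rangle+\langle a_1,\ldots,a_{k-1}\rangle\langle b_2,\ldots,b_\ell\rangle$ is the standard one and checks out: with $Q=\langle\textbf{u}\rangle$, $R=\langle u_1,\ldots,u_{n-1}\rangle$, the splits you choose give $\langle\textbf{w}\rangle=Q\bigl((b-1)Q+R\bigr)+(Q-R)Q=bQ^2$ and $\langle\textbf{w}'\rangle=(Q-R)(Q+R)+R^2=Q^2$, and the boundary case $n=1$ is handled correctly by the convention $\langle\,\rangle=1$. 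The hypotheses $u_n>1$, $b>1$ are used exactly where you say, to keep the entries $u_n-1$ and $b-1$ positive.
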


% \begin{stat}{[1]}
 %Пусть $\textbf{u}=(u_1,\ldots,u_n)$ - последовательность натуральных чисел. Тогда:
 %\begin{itemize}
 %\item[(1)] $\langle u_1,\ldots, u_n\rangle = \langle u_n,\ldots, u_1 \rangle,$
 %\item[(2)] если $u_2\neq 1$, то $\langle 1, u_2 - 1,\ldots, u_n\rangle = \langle u_2,\ldots, u_n\rangle.$
 %\end{itemize}
 %\end{stat}

 \begin{lemm}
 Пусть задано $m>s$. Пусть $b=a^r, \text{ где } a, \:r\in \mathbb{N}_0, \:a\ge 2, \:r\le s,$ \\$r\equiv~m~(~mod ~2)$ и  $\:\textbf{u}=(u_1,\ldots,u_n)$ - последовательность натуральных чисел, такая что ${1\le u_i\le a^s -1, i=1,\ldots, n,\quad u_1, u_n\neq 1, a^s-1 }\:$ и $\:{\langle\textbf{u}\rangle=a^{\frac{m-r}{2}}}$.

 Тогда:
 \begin{itemize}
 \item[(1)] Для последовательностей
 $$
 \textbf{w}=(w_1,\ldots, w_J)=
 \left\{
 \begin{aligned}
 &(u_1,\ldots,u_n,b-1,1, u_n-1,u_{n-1},\ldots,u_1)\qquad &\text{при } r\neq 0,\\
 &(u_1,\ldots,u_{n-1},u_n-1,b-1,1, u_n,\ldots,u_1)\qquad &\text{при } r\neq 0,\\
 &(u_1,\ldots,u_{n-1},u_n+1,u_n-1,u_{n-1},\ldots,u_1)\qquad &\text{при } r=0,\\
 &(u_1,\ldots,u_{n-1},u_n-1,u_n+1,u_{n-1},\ldots,u_1)\qquad &\text{при } r=0,\\
 \end{aligned}
 \right.
 $$
 справедливо $\langle \textbf{w}\rangle=a^{m};$
 \item[(2)] для перечисленных последовательностей $\textbf{w}=(w_1,\ldots, w_J):$
 \item[$\bullet$] $J=2(n+r);$
 \item[$\bullet$] $ 1\le w_j\le a^s-1$ для $1\le j\le J$;
 \item[$\bullet$] $w_1,w_{J} \neq1, a^s-1;$
 \item[$\bullet$] все эти последовательности различны.
 \end{itemize}
 \end{lemm}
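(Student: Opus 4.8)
\emph{Sketch of proof.} The lemma is, in essence, a repackaging of the Hensley lemma quoted above together with the palindromic symmetry $\langle u_1,\dots,u_n\rangle=\langle u_n,\dots,u_1\rangle$ of continuants. I would prove part (1) case by case. If $r\neq 0$, then $b=a^{r}\ge 2$ and the Hensley lemma applies to $\mathbf u$ (legitimately, since $u_n\neq 1$ forces $u_n>1$), producing a sequence $\mathbf w$ with $\langle\mathbf w\rangle=b\langle\mathbf u\rangle^{2}=a^{r}\cdot(a^{(m-r)/2})^{2}=a^{m}$; the two sequences displayed for $r\neq 0$ are then $\mathbf w$ and its reversal $\{\mathbf w\}$ (as one checks directly), and $\langle\{\mathbf w\}\rangle=\langle\mathbf w\rangle$. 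If $r=0$, then $b=1$ and $\langle\mathbf u\rangle=a^{m/2}$, and the two displayed sequences are the $\mathbf w'$ of the Hensley lemma (of continuant $\langle\mathbf u\rangle^{2}=a^{m}$) and its reversal. Thus part (1) reduces to matching the four displayed sequences against $\mathbf w$, $\mathbf w'$ and their mirror images, which is mechanical.

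Part (2) is then direct verification. The value of $J$ comes from counting the entries of each displayed sequence. For the bound $1\le w_j\le a^{s}-1$ one runs through the possible types of entry: an original $u_i$ satisfies the bound by hypothesis; $u_n-1\ge 1$ and $u_n+1\le a^{s}-1$ since $u_n\neq 1,a^{s}-1$ forces $2\le u_n\le a^{s}-2$; the symbol $1$ is harmless; and $b-1=a^{r}-1$ lies in $[1,a^{s}-1]$ because $1\le r\le s$. The endpoint condition holds because $w_1=w_J=u_1$ in every case and $u_1\neq 1,a^{s}-1$ by hypothesis; one also notes $n\ge1$, since $m>s\ge r$ gives $(m-r)/2\ge1$ and hence $\langle\mathbf u\rangle\ge a\ge2$.

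The only assertion requiring a genuine argument is that all the constructed sequences are pairwise distinct, and this is where I would concentrate. The plan is to invert the construction: given $\mathbf w$, recover $n$ from its length; decide which of the two constructions was used by comparing the entry in position $n$ with the one three places later (for $r\neq 0$ these are $u_n$ and $u_n-1$; for $r=0$ compare positions $n$ and $n+1$, i.e.\ $u_n+1$ and $u_n-1$) --- the two entries always differ, so the test is decisive; then read off $\mathbf u$ from the initial segment of $\mathbf w$, adjusting position $n$ by $\pm1$ if the second construction was used. Since the seed $\mathbf u$ and the construction index are thereby determined by $\mathbf w$, the map is injective and all constructed sequences are distinct. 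I expect this injectivity check --- together with a once-and-for-all inspection of the degenerate small-$n$ configurations --- to be the main, though still routine, source of friction; everything else follows immediately from the Hensley lemma and the reversal symmetry of continuants.
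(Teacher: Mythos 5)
Your proposal is correct and follows exactly the route the paper intends: the paper's own ``proof'' of this lemma is a single sentence stating that everything follows from Lemma~1 (Hensley), Utверждение~1 (in particular the reversal symmetry $\langle u_1,\dots,u_n\rangle=\langle u_n,\dots,u_1\rangle$) and the definitions, and your write-up is precisely that argument carried out in detail, including the only nontrivial point (distinctness), which the paper itself only elaborates later in Lemma~3. The sole caveat is that you (reasonably) read the second $r\neq 0$ sequence as Hensley's $\mathbf w$ with the block $\dots,u_n-1,1,b-1,\dots$, whereas the statement literally prints $\dots,u_n-1,b-1,1,\dots$ --- an evident typo in the paper, as its own proof of Lemma~3 uses the order you assume.
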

 \begin{proof} Все утверждения леммы 2 следуют из леммы 1, утверждения 1 и определения последовательностей.% Если лемма применяется к последовательности $\textbf{u}=(2)$, то в рассмотренных случаях это происходит при $b\neq1$ и при этом получившаяся последовательность $\textbf{w}=(2,b-1,1,1)$ заменяется на $\textbf{w}=(2,b-1,2)$, а последовательность $(1,1,b-1,2)$ - на $$
 \end{proof}

 \begin{lemm}
 Пусть последовательности $\textbf{w}_1$  и $\textbf{w}_2$, $\langle \textbf{w}_1\rangle=\langle\textbf{w}_2\rangle =a^m $ получены из $ \textbf{u}=(u_1,\ldots,u_n), \langle\textbf{u}\rangle=a^{m_1} $ и $ \textbf{v}=(v_1,\ldots,v_k), \langle\textbf{v}\rangle=a^{m_2}$ по лемме 2 с использованием $b_1=a^{r_1} $ и $b_2=a^{r_2} $  соответственно. Тогда, если $\textbf{u}$ и $\textbf{v}$ различны, то $\textbf{w}_1$  и $\textbf{w}_2$ также различны.
 \end{lemm}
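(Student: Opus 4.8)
The plan is to track how the constructing map of Lemma 2 acts on the boundary data of a sequence and to show that this boundary data, together with the exponent $m$, already determines the preimage $\mathbf{u}$ and the parameter $b=a^r$. First I would observe that in every one of the four formulas for $\mathbf{w}=(w_1,\dots,w_J)$ in Lemma 2, the outer elements of $\mathbf{w}$ reproduce the outer elements of $\mathbf{u}$: concretely $w_1=u_1$ and, reading from the far right, the sequence $\mathbf{w}$ ends with $u_1,u_2,\dots$; so the left half of $\mathbf{w}$ is essentially $\mathbf{u}$ with a bounded modification near its right end, and the right half is a near-mirror copy. Hence from $\mathbf{w}_1=\mathbf{w}_2$ one reads off, letter by letter from both ends simultaneously, that $\mathbf{u}$ and $\mathbf{v}$ agree on all positions except possibly an $O(1)$-size block in the middle where the ``gluing'' letters $b-1,1$ or $u_n+1,u_n-1$ sit.

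Next I would pin down that middle block. The length identity $J=2(n+r)$ from Lemma 2(2) shows that $J$ determines $n+r$; combined with the positions at which the two halves of $\mathbf{w}$ stop matching (i.e.\ where the palindromic-up-to-one-symbol structure breaks), one recovers $n$ and hence $r$ separately, and therefore $b=a^r$ is determined by $\mathbf{w}$. With $r$ known we know which of the four cases of Lemma 2(1) produced $\mathbf{w}$ (the $r=0$ cases versus the $r\neq 0$ cases, and within $r\neq 0$ the two variants are distinguished by whether the block $b-1,1$ appears immediately after $u_n$ or after $u_n-1$). In each case the finitely many coordinates of the gluing block are explicit functions of $u_n$ (namely $u_n-1$, $1$, $b-1$, or $u_n+1$, $u_n-1$), so from $\mathbf{w}$ we recover $u_n$ as well, and thus all of $\mathbf{u}=(u_1,\dots,u_n)$. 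Applying the same reconstruction to $\mathbf{w}_2$ gives $\mathbf{v}$, so $\mathbf{w}_1=\mathbf{w}_2$ forces $\mathbf{u}=\mathbf{v}$ and $b_1=b_2$; contrapositively, if $\mathbf{u}\neq\mathbf{v}$ then $\mathbf{w}_1\neq\mathbf{w}_2$, which is the claim.

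The main obstacle is bookkeeping rather than depth: one must check that the ``edge effects'' never interfere, i.e.\ that the bounded modification near the right end of the left half and the mirror copy on the right cannot conspire, for two genuinely different inputs, to yield literally the same finite word. This is where the hypotheses carried along from Lemma 2 are essential --- $u_1,u_n\neq 1$ and $u_1,u_n\neq a^s-1$, together with $1\le u_i\le a^s-1$, guarantee that the inserted letters ($1$, $b-1\le a^s-1$, $u_n\pm 1$) cannot be mistaken for an ``ordinary'' letter of the ambient sequence in a way that would shift the parsing. I would verify this by a short case analysis over the four constructions, in each case writing $\mathbf{w}$ explicitly and locating the first index (from the left) and the first index (from the right) at which $\mathbf{w}$ ceases to be a palindrome; these two indices, read off from the word alone, recover $n$, $r$, and $u_n$, completing the argument.
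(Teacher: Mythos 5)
Your argument is correct and in substance coincides with the paper's proof: both come down to writing out the four explicit forms of $\textbf{w}$, comparing them position by position (the outer parts copy $\textbf{u}$, so only the middle block matters), and deriving the impossible simultaneous equations $u_n=v_n-1$, $u_n-1=v_n$ (resp. $u_n+1=v_n-1$, $u_n-1=v_n+1$) when the middle blocks are forced to match. The paper packages this as a two-case analysis on the lengths ($n=k$ and $n=k+1$) rather than as your reconstruction of $n$, $r$ and $u_n$ from the word $\textbf{w}$, but the key computation is the same.
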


 \begin{proof}
  Пусть последовательность $\textbf{u}$ удовлетворяет условию леммы 2,\\ последовательность $\textbf{w}_1$ получена из $\textbf{u}$. Тогда длина $\textbf{w}_1$ может быть равна $2n$ или $2n+2$. Аналогично, если $\textbf{v}$ удовлетворяет условию леммы 2, то длина $\textbf{w}_2$, полученной из нее, может быть равна $2k$ или $2k+2$.

  Рассмотрим 2 случая:

  1. Пусть $n=k$ (длины последовательностей совпадают). Тогда если $r_1=r_2=0$, то последовательности $\textbf{w}_1$ и $\textbf{w}_2$ могут иметь вид
  $$
  \textbf{w}_1=
 \left[
 \begin{aligned}
 %&(a_1,\ldots,a_n,b-1,1, a_n-1,\ldots,a_1)\qquad &\text{при } r\neq 0,\\
 %&(a_1,\ldots,a_n-1,b-1,1, a_n,\ldots,a_1)\qquad &\text{при } r\neq 0,\\
 &(u_1,\ldots,u_{n-1}, u_n+1,u_n-1,u_{n-1},\ldots,u_1),\\
 &(u_1,\ldots,u_{n-1},u_n-1,u_n+1,u_{n-1},\ldots,u_1);\\
 \end{aligned}
 \right.
  $$

  $$
  \textbf{w}_2=
 \left[
 \begin{aligned}
 %&(a_1,\ldots,a_n,b-1,1, a_n-1,\ldots,a_1)\qquad &\text{при } r\neq 0,\\
 %&(a_1,\ldots,a_n-1,b-1,1, a_n,\ldots,a_1)\qquad &\text{при } r\neq 0,\\
 &(v_1,\ldots,v_{n-1},v_n+1,v_n-1,v_{n-1},\ldots,v_1),\\
 &(v_1,\ldots,v_{n-1},v_n-1,v_n+1,v_{n-1}\ldots,v_1),\\
 \end{aligned}
 \right.
  $$
  Предположим, что $\textbf{w}_1=\textbf{w}_2$. Это невозможно, так как в этом случае, во-первых, ${(u_1,\ldots,u_{n-1})=(v_1,\ldots,v_{n-1})}$ и, во-вторых, либо ${u_n=v_n}$, \\либо ${u_n + 1=v_n-1}$ и ${u_n -1=v_n +1}$ одновременно.
  % \begin{align*}
  %&\text{ либо } \textbf{u}=\textbf{v},\\
  %&\text{ либо } (u_1,\ldots,u_n-1)=(v_1,\ldots,v_n+1) \text{ и } (u_1,\ldots,u_n+1)=(v_1,\ldots,v_n-1) \text{ %одновременно.}
  %\end{align*}
  \\
  Если же $r_1,r_2>0$, то
  $$
  \textbf{w}_1=
 \left[
 \begin{aligned}
 %&(a_1,\ldots,a_n,b-1,1, a_n-1,\ldots,a_1)\qquad &\text{при } r\neq 0,\\
 %&(a_1,\ldots,a_n-1,b-1,1, a_n,\ldots,a_1)\qquad &\text{при } r\neq 0,\\
 &(u_1,\ldots,u_n, a^{r_1}-1,1,u_n-1,u_{n-1},\ldots,u_1),\\
 &(u_1,\ldots,u_{n-1},u_n-1,1,a^{r_1}-1,u_n,\ldots,u_1);\\
 \end{aligned}
 \right.
  $$

  $$
  \textbf{w}_2=
 \left[
 \begin{aligned}
 %&(a_1,\ldots,a_n,b-1,1, a_n-1,\ldots,a_1)\qquad &\text{при } r\neq 0,\\
 %&(a_1,\ldots,a_n-1,b-1,1, a_n,\ldots,a_1)\qquad &\text{при } r\neq 0,\\
 &(v_1,\ldots,v_n, a^{r_2}-1, 1,v_n-1,v_{n-1},\ldots,v_1),\\
 &(v_1,\ldots,v_{n-1},v_n-1,1, a^{r_2}-1, v_n,\ldots,v_1).\\
 \end{aligned}
 \right.
 %\end{aligned}
  $$
  Если предположить, что $\textbf{w}_1=\textbf{w}_2$, получим, ввиду $\textbf{u}\neq \textbf{v}$,
  %\begin{align*}
  %&\text{либо } \textbf{u}=\textbf{v},\\
  %\text{либо }
  $$(u_1,\ldots,u_n,a^{r_1}-1,1,u_n-1,u_{n-1},\ldots,u_1) = (v_1,\ldots,v_{n-1},v_n-1,1, a^{r_2}-1, v_n,\ldots,v_1),$$
  %\end{align*}
  то есть
  %$$
  %\left\{
  %\begin{aligned}
  %&
  $u_n=v_n-1 \text{ и }
  %\\
  %&
  u_n - 1=v_n,
  %\\
  %\end{aligned}
  %\right.
  $ %$
  одновременно, что невозможно.

  2. Пусть $n=k+1$ (длины последовательностей $\textbf{u}$ и $\textbf{v}$ отличаются на 1), тогда $r_1=1, r_2=0$ и
  $$
  \textbf{w}_1=
 \left[
 \begin{aligned}
 %&(a_1,\ldots,a_n,b-1,1, a_n-1,\ldots,a_1)\qquad &\text{при } r\neq 0,\\
 %&(a_1,\ldots,a_n-1,b-1,1, a_n,\ldots,a_1)\qquad &\text{при } r\neq 0,\\
 &(u_1,\ldots,u_k, u_{k+1}+1,u_{k+1}-1,u_k,\ldots,u_1),\\
 &(u_1,\ldots,u_k,u_{k+1}-1,u_{k+1}+1,u_k,\ldots,u_1);\\
 \end{aligned}
 \right.
  $$

  $$
  \textbf{w}_2=
 \left[
 \begin{aligned}
 %&(a_1,\ldots,a_n,b-1,1, a_n-1,\ldots,a_1)\qquad &\text{при } r\neq 0,\\
 %&(a_1,\ldots,a_n-1,b-1,1, a_n,\ldots,a_1)\qquad &\text{при } r\neq 0,\\
 &(v_1,\ldots,v_k, a^{r_2}-1, 1,v_k-1,v_{k-1},\ldots,v_1),\\
 &(v_1,\ldots,v_{k-1},v_k-1,1, a^{r_2}-1, v_k,\ldots,v_1),\\
 \end{aligned}
 \right.
  $$
  предположив, что $\textbf{w}_1=\textbf{w}_2$, получим $v_{k}=u_k \text{ и } v_k-1=u_k$ одновременно, что невозможно.
  \end{proof}

  \begin{rem}
  Пусть в результате применения леммы 2 была получена последовательность
  $\textbf{u}=(u_1,\ldots,u_n), \; \langle\textbf{u}\rangle=a^m,\; 1\le u_i\le a^s-1,\; i=1,\ldots,n$,
  тогда для последовательностей ${(1, u_1-1,\ldots, u_n), (u_1,\ldots,u_n-1,1)}$ и ${(1, u_1-1,\ldots,u_n-1, 1)}$ выполнено:
  \begin{itemize}
  \item[1.] континуанты равны $a^m$;
  \item[2.] элементы являются натуральными числами;
  \item[3.] элементы строго ограничены сверху числом $N=a^s$.
  \item[4.] указанные последовательности различны.
  \end{itemize}
  %существуют, различны и имеют все элементы ограниченными $a^s-1$.
  \end{rem}
\newpage
  \begin{explain}
  Для оценки $f_m$, где $m\ge2$, будем использовать числа $g_m$, определенные следующим образом:
  $$
  g_2=\ldots=g_{s+2}=1;
  $$
  при $m\ge s+3$  числа $g_m$ определяются рекуррентным соотношением
  $$
  g_m=2\sum_{\substack{r=0\\ r\equiv m \:(\mod 2)}}^{s} g_{\frac{m-r}{2}}=2\sum_{k=\left\lfloor \frac{m-s+1}{2} \right\rfloor }^{\left\lfloor\frac{m}{2}\right\rfloor}g_{k}.
  \eqno (9)
  $$
  При $s\ge 3$ для $m=2,\ldots,s$ справедливо: $f_m\ge 4 g_m$, так как для каждого из таких $m$ будет существовать хотя бы одна последовательность натуральных чисел $(u_1,\ldots, u_n) \text{ c } u_i< a^s, i=1,\ldots,n$, у которой $u_1,u_n\neq1, a^s-1$ Это следует из существования для каждого такого $m$ несократимых дробей со знаменателями $a^m$, числителями меньше $\frac{a^m}{2}$, неполные частные представления в виде цепной дроби которых ограничены (строго) $a^s$. Множитель $4$ появляется согласно замечанию $5$. Последовательности, континуанты которых равны ${a^{s+1}, a^{s+2}\text{ и } a^{s+3}}$, будут существовать согласно лемме 2, примененной к последовательностям с континуантами $a^{\left\lfloor\frac{s}{2}\right\rfloor}, a^{\left\lfloor\frac{s-2}{2}\right\rfloor}$ и замечанию $5$.\\
  Для $m\ge s+3$, в результате применения лемм 2, 3 с различными ${r\in{1,\ldots, s}}$, \\где ${r=m \:(\mod 2)}$, получим, что ${f_m\ge g_m}$. Применение замечания $5$ дает ${f_m\ge4 g_m}$.\\
  %в результате применения леммы 2 с множитель 2 перед суммой $(8)$ появляется вследствие лемм 2 и 3 (для каждой $\textbf{u}$, удовлетворяющей условию леммы, получаем 2 последовательности $\textbf{w}$ и ${\textbf{w}\prime}$). В
  %итоге, после применения замечания к $\textbf{w}$ и $\textbf{w} \prime$:
  %$$
  %\text{для каждого } m\ge 2: \qquad f_m\ge4 g_m.
  %$$
  (Для случаев ограничений $N$, не рассмотренных здесь, рассуждения будут проведены отдельно.)

  Метод состоит в том, чтобы дать возможно более точную нижнюю оценку для $g_m$ на основании формулы $(9)$, примененной достаточно большое количество раз.
  \end{explain}

  \begin{theo}Пусть $a, s\in \mathbb{N}, a\ge 2.$
 В рамках рассматриваемого метода невозможно при фиксированном $s$ и $m\rightarrow\infty$ получить более точную (по сравнению с теоремой ~1) по порядку оценку снизу для величины $f(a^m, a^s)$ в случаях:
\begin{itemize}
 \item[\textit{(i)}] нечетного $s\ge 3$,
 \item[\textit{(ii)}] $s=4$,
 \item[\textit{(iii)}] $s=2$.
 \end{itemize}
 \end{theo}

  \section{Доказательства теорем.}
  \begin{defi}

  Для двух векторов $u \in\mathbb{N}^n_0 \text{ и } v\in \mathbb{N}^n_0 \text{ положим } u>v \; (u\ge v),$ $ \text{ если для каждого } i, 1\le i \le n \text{ выполнено } u_i > v_i\; (u_i\ge v_i). $

  Аналогично, для двух матриц $A=(a_{ij}) \in\mathbb{N}^{n^2}_0 \text{ и } B=(b_{ij})\in \mathbb{N}^{n^2}_0 \text{ положим }$ \\${ A>B \; (A\ge B),}\text{ если для каждой пары } i,j: 1\le i,j \le n \text{ выполнено }$
  \\${ a_{ij} > b_{ij}\; (a_{ij}\ge b_{ij}).} $
  \end{defi}
\newpage
  \begin{proof}[Доказательство теоремы 1]
  %\begin{itemize}
  %\item[(\textit{i})]
  $ \text{ }$\par
  \begin{flushleft}\textbf{Случай четного $\textbf{s}\ge \textbf{6}$}. \par Доказательство разобьем на 2 части: $s=2 (
 \mod 4) \text{ и } s=0 (\mod 4)$\end{flushleft}

  1. Пусть  $s=4q+2, q\in \mathbb{N}$.
  Для любого четного $m > s+2$ рекуррентная формула $9$ для $g_m$ содержит $\frac{s}{2}+1$ слагаемое, а для нечетного $m > s+2$ указанная сумма имеет $\frac{s}{2}$ слагаемых. Выделим среди чисел $g_m, m\ge 2$ классы $A_1\subset A_2\subset A_3$:
  $$
  \begin{aligned}
  &A_1=\{ g_l | l\equiv0(mod 2)\}\\
  &A_2=\{ g_l | l\equiv0(mod 2)\}\cup \{ g_l | l\equiv1(mod 4)\}\\
  &A_3=\{ g_l | l \in \mathbb{N}\}
  \end{aligned}
  \eqno (10)$$
  Таким образом, при $m\ge s+3$, принадлежность $g_m$ классу можно определить следующим образом:

  $A_1=\{ g_m | $ сумма в (9) содержит $ \frac{s}{2} + 1 $ слагаемых, из которых $\lceil\frac{s}{4}\rceil$ четные\},% из которых $ \lceil\frac{s}{2}\rceil $  четные$\}$

  $A_2=\{ g_m | $ сумма в (9) содержит хотя бы $ \frac{s}{2} $ слагаемых, из которых хотя бы $ \lceil\frac{s}{4} \rceil$  четные$\}$

  $A_3=\{ g_m | $ сумма в (9) содержит хотя бы $ \frac{s}{2} $ слагаемых, из которых хотя бы $\lfloor\frac{s}{4}\rfloor$ четные$\}$.
  %&A_2=\{ g_m | m\equiv0(mod 2)\}\\
  %&A_3=\{ g_m | m \in \mathbb{N}\}
  %\end{aligned}
  %$$

  Пусть дано достаточно большое $m$. Тогда результат одного шага оценки $g_m$ по рекуррентной формуле $(9)$ можно записать, используя скалярное умножение вектора-строчки на вектор-столбец, следующим образом:
  $$
  g_m\ge 2(q, \left\lfloor\frac{q + 1}{2}\right\rfloor, \left\lceil\frac{q + 1}{2}\right\rceil)\begin{pmatrix} a^0_1\\ \rule{0pt}{5mm}a^0_2\\ \rule{0pt}{5mm} a^0_3 \end{pmatrix},
  \eqno (11)$$
  где $a^0_1, a^0_2, a^0_3$ - наименьшие из элементов классов $A_1, A_2 \text{ и } A_3 $ соответственно, получившихся после применения одного шага рекуррентной формулы; $q, \left\lfloor\frac{q + 1}{2}\right\rfloor, \left\lceil\frac{q+1}{2}\right\rceil$ - количества элементов, заведомо принадлежащих каждому из классов, получившихся после применения формулы. Причем $\left\lceil\frac{q+1}{2}\right\rceil$ - количество элементов $A_2$, не учтенных при подсчете количества элементов $A_1$; $\left\lfloor\frac{q+1}{2}\right\rfloor$ - количество элементов $A_3 $, не учтенное при подсчете элементов $A_1$ и $A_2$.

  После первого применения формулы $(9)$, к каждому слагаемому, входящему в получившуюся сумму, также применим эту рекуррентную формулу, и т.д. Рассмотрим последовательность вложенных интервалов $I_0\subset I_1\subset\ldots\subset I_n$, соответствующих интервалам изменения индексов $k $ слагаемых $g_k$, входящих в сумму $(9)$ для $g_m$ после $j$~ применений рекуррентной формулы, где $j=1,\ldots,n+1$.
  $$
  \begin{aligned}
  I_0=&\left\{\; v\in\mathbb{N} \quad | \quad \lfloor\frac{m-s+1}{2}\rfloor\le v\le \lfloor\frac{m}{2}\rfloor \; \right\}, %\qquad
   &i_0=\min_{\substack{v \in \text{ } I_0}} v;\\
  I_1=&\left\{\; v\in\mathbb{N} \quad | \quad \lfloor\frac{i_0-s+1}{2}\rfloor\le v\le \lfloor\frac{m}{2}\rfloor \; \right\}, %\qquad
   &i_1=\min_{\substack{v \in \text{ } I_1}} v;\\
  %\qquad \qquad
  &\qquad \qquad
  \qquad \qquad
  \cdots \cdots \cdots &\\
  I_{n-1}= &\left\{\; v\in\mathbb{N} \quad | \quad \lfloor\frac{i_{n-2}-s+1}{2}\rfloor\le v\le \lfloor\frac{m}{2}\rfloor \; \right\}, %\qquad
   &\quad i_{n-1}=\min_{\substack{v \in \text{ }I_{n-1}}} v;\\
  I_{n}= &\left\{\; v\in\mathbb{N} \quad | \quad \lfloor\frac{i_{n-1}-s+1}{2}\rfloor\le v\le \lfloor\frac{m}{2}\rfloor \; \right\}, %\qquad
   &i_{n}=\min_{\substack{v \in \text{ }I_{n}}} v, \\
  \end{aligned}
  \eqno (12)
  $$
  Число $n=n(m)$ мы хотим выбрать как можно больше. Единственным ограничением является условие $i_{n-1}\ge s+3$ (иначе дальнейшее применение формулы $(9)$ незаконно).
  \\Таким образом, $i_n = \max \{ \; j \quad | \quad i_{j-1} \ge s+3\; \}. $

  \begin{flushleft} Определим также числа $a^i_j,\; i=1,\ldots,n; j=1,2,3$:%\end{flushleft}
  $$
  a^i_j=\min \{\; g_k\in A_i\quad | \quad k\in I_j\;\}.
  $$
  \end{flushleft}
  \begin{lemm} Для $l=0,\ldots,n-1:$
  $$
  \begin{pmatrix} a^l_1\\ \rule{0pt}{5mm}a^l_2\\\rule{0pt}{5mm} a^l_3 \end{pmatrix}\ge 2A\begin{pmatrix} a^{l+1}_1\\ \rule{0pt}{5mm}a^{l+1}_2\\ \rule{0pt}{5mm}a^{l+1}_3 \end{pmatrix}, \qquad \text{ где } A=\begin{pmatrix}q+1 & \left\lfloor\frac{q+1}{2}\right\rfloor & \left\lceil\frac{q+1}{2}\right\rceil\\ \rule{0pt}{5mm} q+1 & \left\lfloor \frac{q}{2} \right\rfloor & \left\lceil\frac{q}{2}\right\rceil \\ \rule{0pt}{5mm} q & \left\lfloor \frac{q+1}{2} \right\rfloor & \left\lceil\frac{q+1}{2}\right\rceil \end{pmatrix}
  \eqno (13)
  $$
  \end{lemm}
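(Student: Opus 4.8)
The plan is to prove Lemma~\ref{lemm} (the matrix inequality in display~(13)) by unwinding the definitions of the quantities $a^l_j$ and the recurrence~(9), and exhibiting, for each of the three classes $A_1,A_2,A_3$ on the left, enough summands of each class among the terms $g_k$ produced by one application of~(9). Concretely, fix $l$ with $0\le l\le n-1$ and let $k^\star\in I_l$ be an index (in the relevant residue class) realizing $a^l_i=g_{k^\star}$. Applying~(9) to $g_{k^\star}$ writes it as twice a sum of $\lceil s/2\rceil$ or $\lceil s/2\rceil{+}1$ consecutive terms $g_\kappa$ with $\kappa$ running over $\lfloor (k^\star-s+1)/2\rfloor\le\kappa\le\lfloor k^\star/2\rfloor$; since $k^\star\in I_l$, all these $\kappa$ lie in $I_{l+1}$ by the definition~(12) of the nested intervals. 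It then remains to count, among those consecutive $\kappa$'s, how many satisfy $\kappa\equiv 0\pmod 2$ (giving terms in $A_1$), how many satisfy $\kappa\equiv 0,1\pmod$ with the stated refinement (terms in $A_2$ not already in $A_1$), and how many remain (terms in $A_3$ not in $A_2$) --- and these counts, over a window of $2q+1$ or $2q+2$ consecutive integers, are exactly $q{+}1,\lfloor(q{+}1)/2\rfloor,\lceil(q{+}1)/2\rceil$ for a term whose own index makes~(9) have $s/2{+}1$ summands, and $q{+}1,\lfloor q/2\rfloor,\lceil q/2\rceil$ for one with $s/2$ summands, which is precisely what the three rows of the matrix $A$ in~(13) record. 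Replacing each selected $g_\kappa$ by the minimum over its class-and-interval, namely by $a^{l+1}_1$, $a^{l+1}_2$ or $a^{l+1}_3$ respectively (using the monotonicity/nesting $A_1\subset A_2\subset A_3$ and $I_{l+1}\supset$ the window), yields the coordinatewise inequality with the factor $2$ pulled out.

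The key steps, in order, are: (i) record the elementary counting fact that among any $2q{+}1$ (resp.\ $2q{+}2$) consecutive integers there are exactly $q{+}1$ even ones, and that among consecutive integers the residues mod~$4$ are equidistributed up to the obvious rounding, so that the ``$A_2$ not in $A_1$'' and ``$A_3$ not in $A_2$'' surpluses are $\lfloor\cdot/2\rfloor,\lceil\cdot/2\rceil$ of the appropriate count; (ii) verify, from~(12) and the parity bookkeeping in the paragraph defining $A_1,A_2,A_3$ via~(9), that the window of indices $\kappa$ arising from a term $g_{k^\star}$ with $k^\star\in I_l$ is contained in $I_{l+1}$, so every $g_\kappa$ appearing is $\ge a^{l+1}_{i(\kappa)}$ where $i(\kappa)$ is its class; (iii) assemble (i) and (ii) into the three scalar inequalities $a^l_i\ge 2\bigl((\text{count in }A_1)\,a^{l+1}_1+(\text{surplus in }A_2)\,a^{l+1}_2+(\text{surplus in }A_3)\,a^{l+1}_3\bigr)$ for $i=1,2,3$, and read off that the coefficient vectors are the rows of $A$. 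One must be a little careful that the value of $a^l_i$ is attained at an index $k^\star$ whose own position relative to the residue classes guarantees~(9) has \emph{at least} the claimed number of summands of each type --- this is exactly why the class $A_1$ (even indices, where~(9) is longest) gets the row $(q{+}1,\lfloor(q{+}1)/2\rfloor,\lceil(q{+}1)/2\rceil)$ while the weaker classes $A_2,A_3$, whose minimizing index may be odd, get the row with $\lfloor q/2\rfloor,\lceil q/2\rceil$.

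The main obstacle I anticipate is step~(ii) combined with the edge effects in~(iii): one must check that the floor/ceiling truncations in the interval endpoints~(12) never cause a $g_\kappa$ to fall outside $I_{l+1}$ or to drop below the threshold index $s{+}3$ prematurely (the latter is handled by the stopping rule $i_{n-1}\ge s{+}3$, so it only matters for $l\le n-1$, which is the stated range), and that when the minimizing index $k^\star$ for a weaker class sits near the boundary of $I_l$ the count of each residue class among the resulting $\kappa$'s still does not fall below the entries in the corresponding row of $A$. Once the clean counting lemma in~(i) is in hand, this reduces to a short case check on the parity of $k^\star$ and the value of $s\bmod 4$ ($s=4q+2$ in this part of the proof), and the inequality follows; the analogous matrix for the $s\equiv0\pmod 4$ sub-case will be obtained by the same argument with the roles of the residue classes permuted.
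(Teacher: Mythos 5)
Your proposal follows essentially the same route as the paper's proof: unroll one application of the recurrence (9) at an index $k^\star$ realizing each $a^l_j$, count how many of the resulting consecutive summand indices are guaranteed to lie in $A_1$, in $A_2\setminus A_1$ and in $A_3\setminus A_2$ according to the worst-case residue of $k^\star$ modulo $4$ permitted by its class, and bound each summand below by the corresponding class minimum over $I_{l+1}$. The only slip is that you list just two count-triples for the three rows (the third row of $A$ is $\left(q,\left\lfloor\frac{q+1}{2}\right\rfloor,\left\lceil\frac{q+1}{2}\right\rceil\right)$, corresponding to a minimizer of unrestricted residue), but this is exactly the ``short case check on the parity of $k^\star$'' you defer to, and the paper's own proof is no more detailed.
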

  \begin{proof}
  Действительно, для каждого $j\in\{1, 2, 3\}, \quad a^l_j\; $
  - это наименьший из элементов класса $A_j$, получившихся после $l+1$
  применений рекуррентной формулы (9). Продолжим оценку этого элемента по рекуррентной формуле.
  Подсчитаем, сколько индексов $k$ в сумме $(9)$ для $a^l_j$ гарантированно принадлежат классам
  $A_1$ и $A_2$ (остальные учтем как принадлежащие $A_3$).
  Число
  $a^l_1$ будет оцениваться снизу удвоенной суммой $q+1$ элемента класса $A_1$,  $\left\lceil\frac{q+1}{2}\right\rceil$ элементов класса $A_2$ и $\left\lfloor\frac{q+1}{2}\right\rfloor$ элементов класса $A_3$, причем индексы указанных элементов принадлежат $\left[i_{l+1}, i_{l+1}+\frac{s+2}{2}\right]$; указанное выражение не меньше, чем сумма, получающаяся при умножении первой строки матрицы $2A$ на столбец $(a^{l+1}_1, a^{l+1}_2, a^{l+1}_3)^{\perp}$, так как $a^{l+1}_j \text{ где } j=1,2,3$ - наименьшие из элементов соответствующих классов с индексами, принадлежащими $\left[i_{l+1}, i_{l+1} + \frac{s+2}{2}\right]$. Аналогично рассматривается $a^l_2$ и $a^l_3$ и, соответственно, вторая и третья строки матрицы $A$.
  \end{proof}

  \begin{lemm}
  Пусть $n$ - максимально возможно количество применений формулы (9)
  при вычислении $g_m$, ограниченное условием $i_{n-3}\ge s+3$, тогда
  $$n\ge\left\lceil\log_2 \left\lfloor\frac{m-s+1}{4s+4}\right\rfloor\right\rceil.
  \eqno (14)$$
  \end{lemm}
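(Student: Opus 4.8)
The plan is to extract from the recursion $(12)$ a clean geometric-decay estimate for the left endpoints $i_j=\min I_j$, iterate it, and compare the outcome with the right-hand side of $(14)$; the index shift built into the condition ``$i_{n-3}\ge s+3$'' — which makes $n=j^{*}+3$ for an appropriate $j^{*}$ — will provide all the slack needed. First I would note that $i_j\le\lfloor m/2\rfloor$ for every $j$, so $(12)$ reduces to $i_{j+1}=\bigl\lfloor\frac{i_j-s+1}{2}\bigr\rfloor$; since $\frac{i_j-s+1}{2}$ is an integer or a half-integer, a parity check gives $i_{j+1}\ge\frac{i_j-s}{2}$, equivalently $i_{j+1}+s\ge\frac12(i_j+s)$ after the substitution $b_j:=i_j+s$ that removes the additive constant. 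Iterating $j$ times then gives $i_j+s\ge 2^{-j}(i_0+s)$, and since $i_0=\bigl\lfloor\frac{m-s+1}{2}\bigr\rfloor\ge\frac{m-s}{2}$ one has $i_0+s\ge\frac{m+s}{2}\ge\frac{m-s+1}{2}$, whence
$$
i_j\ \ge\ \frac{m-s+1}{2^{\,j+1}}-s .
$$

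Next I would set $P=\bigl\lfloor\frac{m-s+1}{4s+4}\bigr\rfloor$ and $L=\lceil\log_2 P\rceil$; the hypothesis that $(14)$ is meaningful is exactly $P\ge1$, and this already forces $i_0\ge 2s+2\ge s+3$. Because $(i_j)$ is strictly decreasing as long as it stays positive, the set $\{\,n:i_{n-3}\ge s+3\,\}$ is $\{3,4,\dots,j^{*}+3\}$ with $j^{*}=\max\{\,j\ge 0:i_j\ge s+3\,\}$, so $n=j^{*}+3$. If $L\le3$ then $n\ge3\ge L$ and we are done; if $L\ge3$, I would apply the displayed estimate with $j=L-3$ and use $2^{L}\le2P\le\frac{m-s+1}{2s+2}$ to obtain
$$
i_{L-3}\ \ge\ \frac{4(m-s+1)}{2^{L}}-s\ \ge\ 4(2s+2)-s\ =\ 7s+8\ \ge\ s+3 ,
$$
so $j^{*}\ge L-3$ and hence $n=j^{*}+3\ge L$, which is $(14)$. (In passing, $i_0>i_1>\dots>i_{L-3}\ge 7s+8\ge 2$, so each step of the iteration used above is legitimate.)

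I do not expect a genuine obstacle here: the argument is a few lines of arithmetic once the substitution $b_j=i_j+s$ is made. The two points that call for attention are the parity observation behind $i_{j+1}\ge\frac{i_j-s}{2}$, and the fact that the constant $\frac1{4s+4}$ in $(14)$ is tuned to the offset $n=j^{*}+3$: the three extra halving steps leave a very comfortable margin — the crude estimate already yields $i_{L-3}\ge 7s+8$, far above the required $s+3$ — whereas without the offset that constant would be a shade too optimistic for so rough a bound. So the right strategy is to exploit the offset rather than to sharpen the one-step inequality.
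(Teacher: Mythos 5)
Your proof is correct and follows essentially the same route as the paper's: the same one-step inequality $i_{j+1}+s\ge\tfrac{1}{2}(i_j+s)$ (equivalently $i_j\le 2i_{j+1}+s$), iterated and combined with $i_0=\left\lfloor\frac{m-s+1}{2}\right\rfloor$ to extract the logarithmic bound. If anything, you handle the offset in the stopping condition $i_{n-3}\ge s+3$ more explicitly than the paper, which simply iterates down to $i_n\in\{2,\dots,s+2\}$ and solves for $n$; the arithmetic content is identical.
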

  \begin{proof}
  Заметим, что $i_j\le 2i_{j+1} + s$, поэтому
   $$i_0\le 2 i_{1} + s\le2(2i_{2} + s) + s\le\ldots\le2^{n}i_n + (2^n -1)s, \text{ где } i_n\in\{2,\ldots,s+2\}.$$
   Отсюда
   $$
   n\ge \min_{i_n\in{2,\ldots,s+2}} \log_2 \left(\frac{i_0 + s}{i_n + s} \right)=log_2 \left(\frac{\left\lfloor\frac{m-s+1}{2}\right\rfloor + s}{2s+2} \right).
   $$
   Таким образом, %количество шагов, которые мы гарантированно сможем произвести:
   $$
   n\ge\left\lceil \log_{2}\left\lfloor\frac{m-s+1}{4(s+1)}\right\rfloor\right\rceil.
   $$
  \end{proof}

  \begin{lemm} Пусть $\lambda$ - наибольшее из действительных собственных
  значений матрицы $2A$, где $A$ определена в (13), тогда:
  $$
  (q, \left\lfloor\frac{q+1}{2}\right\rfloor, \left\lceil\frac{q+1}{2}\right\rceil )(2A)^n\begin{pmatrix} a^n_1\\ a^n_2\\ a^n_3 \end{pmatrix}\ge
  %\frac{1}{3}\left\lfloor\frac{q}{2}\right\rfloor
  \frac{1}{3}{\lambda}^n.
  \eqno (15)$$
  \end{lemm}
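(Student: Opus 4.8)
The plan is to discard all the arithmetic carried by the vectors $(a^n_1,a^n_2,a^n_3)$ and by the row vector, and to reduce $(15)$ to a lower bound for the sum $S_n$ of all nine entries of the nonnegative matrix $(2A)^n$; the estimate $S_n\ge\lambda^n$ will then follow from the elementary inequality $\rho(B)\le\|B\|$ applied to $B=(2A)^n$.

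First I would record three positivity facts. (a) Every entry of the matrix $A$ from $(13)$ is nonnegative, hence so is every entry of $(2A)^n$. (b) Since $s=4q+2\ge 6$ we have $q\ge 1$, so each coordinate of the row vector $\bigl(q,\lfloor\tfrac{q+1}{2}\rfloor,\lceil\tfrac{q+1}{2}\rceil\bigr)$ is at least $1$. (c) From the definition of the numbers $g_m$ — the base values $g_2=\dots=g_{s+2}=1$ together with the recurrence $(9)$, which only sums doubled positive quantities — an immediate induction gives $g_k\ge 1$ for all $k$ in the relevant range; consequently every $a^n_j=\min\{\,g_k:k\in I_n,\ g_k\in A_j\,\}$ is at least $1$, so the column vector in $(15)$ dominates $(1,1,1)^{\top}$ coordinatewise.

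Combining (a), (b), (c) with monotonicity of multiplication by a nonnegative matrix,
$$
\bigl(q,\lfloor\tfrac{q+1}{2}\rfloor,\lceil\tfrac{q+1}{2}\rceil\bigr)(2A)^n\begin{pmatrix}a^n_1\\ a^n_2\\ a^n_3\end{pmatrix}\ \ge\ (1,1,1)\,(2A)^n\,(1,1,1)^{\top}\ =\ \sum_{i,j}\bigl((2A)^n\bigr)_{ij}=:S_n .
$$
It remains to show $S_n\ge\lambda^n$, which a fortiori yields the required bound $\tfrac13\lambda^n$. Since $2A$ is nonnegative, its spectral radius $\rho(2A)$ is itself an eigenvalue and is real and nonnegative (Perron--Frobenius), so it coincides with the largest real eigenvalue $\lambda$. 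Hence $\lambda^n=\rho(2A)^n=\rho\bigl((2A)^n\bigr)\le\|(2A)^n\|_\infty$; and the $\infty$-norm (the maximal absolute row sum) of the nonnegative matrix $(2A)^n$ equals one of its row sums, hence is $\le S_n$. Chaining these inequalities gives $(15)$.

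The argument is short; the steps that genuinely must not be skipped are the identification $\lambda=\rho(2A)$ — this is exactly what makes $\lambda^n\le\|(2A)^n\|_\infty$ hold for every $n$ and not merely asymptotically — and the entrywise bounds $a^n_j\ge 1$ and $q,\lfloor\tfrac{q+1}{2}\rfloor,\lceil\tfrac{q+1}{2}\rceil\ge 1$. If one would rather avoid the norm inequality, the bound $S_n\ge\lambda^n$ also comes directly from Perron--Frobenius: pick a strictly positive eigenvector $r$ with $2Ar=\lambda r$, sum the coordinates of $(2A)^n r=\lambda^n r$, and use $\sum_j\bigl((2A)^n\bigr)_{ij}r_j\le(\max_j r_j)\,S_n$ together with $\sum_j r_j\ge\max_j r_j$. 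Either way the bound $\tfrac13\lambda^n$ stated in the lemma is not tight; I would keep the weaker form since it suffices for the subsequent use.
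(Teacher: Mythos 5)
Your proof is correct and follows the same overall skeleton as the paper's: both arguments first use the nonnegativity of $2A$ together with the coordinatewise bounds $q,\ \lfloor\frac{q+1}{2}\rfloor,\ \lceil\frac{q+1}{2}\rceil\ \ge 1$ and $a^n_j\ge 1$ to strip away the bordering vectors, and then lower-bound a scalar functional of $(2A)^n$ by a power of $\lambda$. The difference lies in the second step. The paper reduces to the maximal entry of $(2A)^n$ and shows it is at least $\frac{1}{3}\lambda^n$ by a coordinate computation with an eigenvector $\bar x$ of $2A$ belonging to $\lambda$, normalized by $|\bar x|=\max_j x_j$ (which is where the factor $\frac{1}{3}$ comes from). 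You reduce instead to the total entry sum $S_n$ and obtain $S_n\ge\|(2A)^n\|_\infty\ge\rho((2A)^n)=\lambda^n$, after identifying the largest real eigenvalue of the nonnegative matrix $2A$ with its spectral radius via Perron--Frobenius. Your route is marginally cleaner: it makes explicit the identification $\lambda=\rho(2A)$, on which the validity of the bound for every $n$ (not merely asymptotically) really rests; it sidesteps the paper's eigenvector normalization, which tacitly requires a nonnegative eigenvector (again supplied by Perron--Frobenius, but not mentioned in the paper); and it yields the constant $1$ in place of $\frac{1}{3}$, an improvement that is immaterial for the subsequent estimate $(18)$. The only point worth one extra sentence in your write-up is that the minima defining the $a^n_j$ are taken over nonempty sets: the interval $I_n$ is long enough to contain indices from each of the classes $A_1\subset A_2\subset A_3$, after which $a^n_j\ge 1$ follows from $g_k\ge 1$ exactly as you argue.
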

  \begin{proof}
  Так как матрица $ A $ слева и справа домножается на вектора из положительного октанта, то
  $$
  (q, \left\lfloor\frac{q+1}{2}\right\rfloor, \left\lceil\frac{q+1}{2}\right\rceil)(2A)^n\begin{pmatrix} a^n_1\\ a^n_2\\ a^n_3 \end{pmatrix}\ge \max (2A)^n,
  \eqno (16)
  $$
  где $\max (2A)^n$ - максимальный элемент матрицы $(2A)^n$.

  Пусть $\bar{x}$ - собственный вектор матрицы $2A$, отвечающий собственному значению $\lambda$ и \\ ${| \:\bar{x}\:|=\max (x_j\; | \; j=1,2,3)}$, тогда
  $$
  %\begin{aligned}
  2A\bar{x}=\lambda \bar{x} , %\\
  %&\cdots\\
  \quad (2A)^n\bar{x}={\lambda}^n \bar{x}.
  %\end{aligned}
  $$
  Оценим $| \:(2A)^n\bar{x}\: |$:\\ c одной стороны
  $
  | \:(2A)^n\bar{x}\: |=| \:{\lambda}^n\bar{x}\: |={\lambda}^n| \:\bar{x}\: |
  $ так как $\lambda>0$;\\
  с другой стороны, для некоторого $i\in\{1, 2, 3\}$, \\
  ${|\:(2A)^n \bar{x} \: |= |\: a^{(n)}_{i1}x_1 + a^{(n)}_{i2}x_2 +a^{(n)}_{i3}x_3\:|\le 3\max (2A)^n|\:\bar{x}\:|}$, где ${(2A)^n=(a^{(n)}_{ij})}$.\\
  Поэтому, $$\max (2A)^n\ge \frac{1}{3}{\lambda}^n.
  \eqno (17)
  $$
  Объединяя неравенства (16) и (17), получаем
  $$
  (q, \left\lfloor\frac{q+1}{2}\right\rfloor, \left\lceil\frac{q+1}{2}\right\rceil)(2A)^n\begin{pmatrix} a^n_1\\ a^n_2\\ a^n_3 \end{pmatrix}\ge \frac{1}{3}
  %\left\lfloor\frac{q}{2}\right\rfloor
  {\lambda}^n.
  $$
  \end{proof}

  Применение лемм 4, 5 и 6 позволяет при $m\ge 5s+5$ продолжить неравенство (11):
  $$
  \begin{aligned}
  &g_m\ge
  %2(\left\lfloor\frac{q}{2}\right\rfloor, \left\lceil\frac{q}{2}\right\rceil, q)\begin{pmatrix} a^0_1\\ a^0_2\\ a^0_3 \end{pmatrix}\ge
  2(q, \left\lfloor\frac{q+1}{2}\right\rfloor, \left\lceil\frac{q+1}{2}\right\rceil)(2A)^n\begin{pmatrix} a^n_1\\ a^n_2\\ a^n_3 \end{pmatrix}
  %\ge2(\left\lfloor\frac{q}{2}\right\rfloor, \left\lceil\frac{q}{2}\right\rceil, q)(2A)^n\begin{pmatrix} 1\\ 1\\ 1 \end{pmatrix}\ge \frac{2}{3}\left\lfloor\frac{q}{2}\right\rfloor {(2\lambda)}^n
  \ge  \frac{2}{3}{(\lambda)}^{\left\lceil\log_2\left\lfloor\frac{m-s+1}{4s+4}\right\rfloor\right\rceil}
  \ge \\ &\ge\frac{2}{3}\frac{{\lambda}^{\log_2 m}}{{\lambda}^{\log_2 (s+1) + 3}}\gg \frac{1}{s^{\log_2 (s+1) + 3}}m^{\log_2 \lambda},
  \end{aligned}
  \eqno (18)$$
  где константа в знаке $\gg$ является абсолютной.
  %Здесь было использовано, что $\lambda\in\left(\frac{s}{2},\frac{s+1}{2}\right)$. Это является очевидным следствием следующей леммы.
  \begin{lemm}
  Наибольшее из действительных собственных собственных значений матрицы $2A$, где матрица $A$ определена в $(13)$, - это наибольший из действительных корней
 \item[$\textit{(i)}$] многочлена (2), при $s=8r + 2, l\in\mathbb{N}$;
 \item[$\textit{(ii)}$] многочлена (4), при $s=8r + 6, l\in\mathbb{N_0}$;
  \end{lemm}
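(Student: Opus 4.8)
Первым делом отмечу, что в рассматриваемой части доказательства теоремы~1 положено $s=4q+2$, так что $s\equiv 2\pmod{4}$, и оба интересующих нас случая различаются лишь чётностью числа $q$: равенство $s=8r+2$ равносильно тому, что $q=2r$ чётно, а $s=8r+6$~--- тому, что $q=2r+1$ нечётно. Поэтому план состоит в том, чтобы в каждом из этих двух случаев явно раскрыть функции $\lfloor\cdot\rfloor$ и $\lceil\cdot\rceil$ в матрице~(13), выписать характеристический многочлен получающейся целочисленной матрицы $2A$ и убедиться, что он совпадает с $P_s$ (то есть с многочленом~(2) в первом случае и с многочленом~(4) во втором).

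Первый шаг~--- раскрытие округлений. При $q=2r$ матрица $A$ из~(13) принимает вид
$$
A=\begin{pmatrix} 2r+1 & r & r+1\\ 2r+1 & r & r\\ 2r & r & r+1\end{pmatrix},
$$
а при $q=2r+1$~---
$$
A=\begin{pmatrix} 2r+2 & r+1 & r+1\\ 2r+2 & r & r+1\\ 2r+1 & r+1 & r+1\end{pmatrix}.
$$
Единственная аккуратность, которую здесь надо соблюсти,~--- это безошибочно проследить за значениями $\lfloor\cdot\rfloor$ и $\lceil\cdot\rceil$ при каждой из двух чётностей; полезно заодно проверить граничный случай $r=0$ второго варианта, дающий $s=6$ и матрицу $2A=2\left(\begin{smallmatrix}2&1&1\\2&0&1\\1&1&1\end{smallmatrix}\right)$ в согласии с замечанием~2.

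Второй шаг~--- подсчёт коэффициентов характеристического многочлена по формуле $\chi_{2A}(\lambda)=\lambda^3-\operatorname{tr}(2A)\,\lambda^2+\sigma_2\,\lambda-\det(2A)$, где $\sigma_2$~--- сумма трёх главных миноров второго порядка матрицы $2A$; определитель удобно находить элементарными преобразованиями строк (вычитание одной строки из другой сразу обнуляет две позиции). В случае $q=2r$ получается $\operatorname{tr}(2A)=8r+4$, $\sigma_2=8r+4$, $\det(2A)=8r$, то есть $\chi_{2A}(\lambda)=\lambda^3-(8r+4)\lambda^2+(8r+4)\lambda-8r$, что при подстановке $s=8r+2$ в точности есть многочлен~(2). В случае $q=2r+1$ аналогично $\operatorname{tr}(2A)=8r+6$, $\sigma_2=-(8r+8)$, $\det(2A)=-(8r+8)$, откуда $\chi_{2A}(\lambda)=\lambda^3-(8r+6)\lambda^2-(8r+8)\lambda+(8r+8)$~--- многочлен~(4) при $s=8r+6$.

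Наконец, так как матрица $2A$ неотрицательна (при $r\ge 1$ даже положительна, а при $r=0$ во втором случае неразложима), по теореме Перрона\,---\,Фробениуса её спектральный радиус является положительным собственным значением; в частности, множество действительных собственных значений матрицы $2A$ непусто, и его наибольший элемент совпадает с наибольшим действительным корнем характеристического многочлена $\chi_{2A}$, а значит~--- ввиду установленного на предыдущем шаге равенства $\chi_{2A}=P_s$~--- с наибольшим действительным корнем соответствующего многочлена $(2)$ или $(4)$. Концептуальных препятствий в этом доказательстве нет; основное затруднение~--- лишь безошибочно раскрыть округления в~(13) и аккуратно провести арифметику при вычислении $\sigma_2$ и $\det(2A)$.
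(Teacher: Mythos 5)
Ваше доказательство верно и по существу совпадает с доказательством в статье: там также подставляются $q=2r$ и $q=2r+1$ в матрицу (13), получаются те же две матрицы, после чего утверждается, что характеристический многочлен матрицы $2A$ есть (2) или (4) соответственно. Вы лишь проделали явно выкладку с $\operatorname{tr}(2A)$, $\sigma_2$ и $\det(2A)$ (арифметика верна) и добавили ссылку на теорему Перрона---Фробениуса, которую статья опускает.
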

  \begin{proof}
  В случаях $\textit(i)$ и $\textit(ii)$ матрица $A$ принимает, соответственно, вид
  $$
  \begin{pmatrix} 2r+1& r & r+1\\ 2r+1 &r & r \\ 2r& r& r+1\end{pmatrix}, \quad\qquad \begin{pmatrix} 2r+2& r+1 & r+1\\ 2r +2 & r & r+1 \\ 2r+1 & r+1 & r+1\end{pmatrix}.
  $$
  Таким образом, характеристическим для матрицы $2A$
  % в случае $\textit(i)$
  в случае $\textit{(i)}$ будет многочлен $(2)$, а в случае $\textit{(ii)}$ - многочлен $(4)$.
  % а в случае $\textit(ii)$ - многочлен
  % $-{\lambda}^3 + \frac{s}{2}{\lambda}^2 + \frac{s}{4}\lambda + \frac{s}{8}$, умножив его на $-1$, получим (1).\\
  %Во втором случае матрица $A$ примет вид
  %$$
  %A=\begin{pmatrix} r+1& r+1 & 2r+1\\ r & r+1 & 2r+2 \\ r & r+1 & 2r+1\end{pmatrix}.
  %$$
  %Характеристический многочлен этой матрицы имеет вид \\$-{\lambda}^3 + \left(\frac{s}{2}+1\right){\lambda}^2 - %\frac{s}{4}\lambda - \left(\frac{s}{8}+\frac{1}{2}\right)$, умножив его на $-1$, получим (3).
  %
  \end{proof}

  2. Пусть $s=4q,\text{ где } q\in \mathbb{N}, q\ge 2$

  Заметим, что для любого четного $m > s+2$ рекуррентная формула $(9)$ для $g_m$ содержит $\frac{s}{2}+1$ слагаемое, а для любого нечетного $m > s+2$ указанная сумма имеет $\frac{s}{2}$ слагаемых. Выделим среди чисел $g_m, m\ge 2$ классы $A_1\subset A_2\subset A_3$:
  $$
  \begin{aligned}
  &A_1=\{ g_l | l\equiv0\:(\mod 4)\}\\
  &A_2=\{ g_l | l\equiv0\:(\mod 2)\}\\
  &A_3=\{ g_l | l \in \mathbb{N}\}
  \end{aligned}
  \eqno(19)
  $$
  Таким образом, при $m\ge s+3$, принадлежность $g_m$ классу можно определить следующим образом:

  $A_1=\{ g_m | $ сумма в $(9)$ имеет $ \frac{s}{2} + 1 $ слагаемое, из которых $ \lceil\frac{s+2}{4}\rceil $  четные$\}$

  $A_2=\{ g_m | $ сумма в $(9)$ имеет $ \frac{s}{2} + 1 $ слагаемое, из которых хотя бы $ \lfloor\frac{s+2}{4} \rfloor$  четные$\}$

  $A_3=\{ g_m | $ сумма в $(9)$ имеет хотя бы $ \frac{s}{2} $ слагаемых, из которых хотя бы  $\lfloor\frac{s}{4}\rfloor$ четные$\}$
  \begin{flushleft}Проводя рассуждения, аналогичные тем, что были проведены при рассмотрении предыдущего случая, получаем вид матрицы $ A $ и для каждого $m\ge 5s+5$ оценку $g_m$:\end{flushleft}
  $$
  A=\begin{pmatrix} \left\lfloor\frac{q+1}{2}\right\rfloor & \left\lceil\frac{q+1}{2}\right\rceil & q\\ \rule {0pt}{5mm}\left\lfloor \frac{q}{2} \right\rfloor & \left\lceil\frac{q}{2}\right\rceil & q+1 \\ \rule{0pt}{5mm}\left\lfloor \frac{q}{2} \right\rfloor & \left\lceil\frac{q}{2}\right\rceil & q \end{pmatrix},
  \eqno(20)$$

  $$
  \begin{aligned}
  &g_m\ge2(\left\lfloor\frac{q}{2}\right\rfloor, \left\lceil\frac{q}{2}\right\rceil, q){(2A)}^n\begin{pmatrix} 1\\ 1 \\ 1 \end{pmatrix}\ge %\frac{2}{3}
  %\left\lfloor\frac{q+1}{2}\right\rfloor
  %{\lambda}^{\left\lceil\log_2 \left\lfloor\frac{m-s+1}{4s+4}\right\rfloor\right\rceil}\ge
   \frac{2}{3}{\lambda}^{\left\lceil \log_2 {\left\lfloor\frac{m-s+1}{4s+4}\right\rfloor} \right\rceil}
   \ge \\ &\ge\frac{2}{3}
  %\left\lfloor\frac{q+1}{2}\right\rfloor
  \frac{{\lambda}^{\log_2 m}}{{\lambda}^{\log_2 (s+1) + 3}}\gg \frac{1}{s^{\log_2 (s+1) + 3}}m^{\log_2 \lambda}
  \end{aligned}
  $$
  %Здесь,так же как и в первой части доказательства, использовано, что %$\lambda\in\left(\frac{s}{2},\frac{s+1}{2}\right)$.

  \begin{lemma}
  Наибольшее из действительных собственных собственных значений матрицы $2A$, где матрица $A$ определена в $(20)$, - это наибольший из действительных корней
 \item[$\textit(i)$] многочлена (1), при $s=8r, l\in\mathbb{N}$;
 \item[$\textit(ii)$] многочлена (3), при $s=8r+4, l\in\mathbb{N}$;
  \end{lemma}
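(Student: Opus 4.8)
The plan is to repeat, almost verbatim, the argument just used for Lemma~7: identify the matrix $A$ of $(20)$ with an explicit $3\times3$ integer matrix depending on a single parameter, compute the characteristic polynomial of $2A$ by a direct $3\times3$ determinant, and observe that it equals $-P_s(\lambda)$; the identification of the largest real eigenvalue with the largest real root of $P_s$ is then immediate.

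First I would set $q=s/4$ and split according to the parity of $q$, which is precisely the split between $s\equiv0$ and $s\equiv4\pmod 8$. In case~(i), $s=8r$ so $q=2r$ is even, and the floor/ceiling entries of $(20)$ evaluate to $\lfloor(q+1)/2\rfloor=r$, $\lceil(q+1)/2\rceil=r+1$, $\lfloor q/2\rfloor=\lceil q/2\rceil=r$, giving
$$
A=\begin{pmatrix} r & r+1 & 2r\\ r & r & 2r+1\\ r & r & 2r\end{pmatrix}.
$$
In case~(ii), $s=8r+4$ so $q=2r+1$ is odd, and the entries become $\lfloor(q+1)/2\rfloor=\lceil(q+1)/2\rceil=r+1$, $\lfloor q/2\rfloor=r$, $\lceil q/2\rceil=r+1$, giving
$$
A=\begin{pmatrix} r+1 & r+1 & 2r+1\\ r & r+1 & 2r+2\\ r & r+1 & 2r+1\end{pmatrix}.
$$
Since $s\ge6$, here $r\ge1$ in both cases, so $2A$ is in fact a strictly positive matrix.

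Next I would compute $\det(2A-\lambda I)$, and the economical route is to subtract the third row of $2A-\lambda I$ from the first two before expanding. In case~(i) the resulting first two rows are $(-\lambda,\,2,\,\lambda)$ and $(0,\,-\lambda,\,\lambda+2)$; expanding along the first column gives
$$
\det(2A-\lambda I)=-\bigl(\lambda^{3}-8r\lambda^{2}-8r\lambda-8r\bigr)=-\bigl(\lambda^{3}-s\lambda^{2}-s\lambda-s\bigr),
$$
which is $-1$ times polynomial~$(1)$. In case~(ii) the resulting first two rows are $(2-\lambda,\,0,\,\lambda)$ and $(0,\,-\lambda,\,2+\lambda)$; the analogous expansion gives
$$
\det(2A-\lambda I)=-\bigl(\lambda^{3}-(8r+6)\lambda^{2}+(8r+4)\lambda+(8r+8)\bigr)=-\bigl(\lambda^{3}-(s+2)\lambda^{2}+s\lambda+(s+4)\bigr),
$$
which is $-1$ times polynomial~$(3)$.

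Finally, since $-P_s$ is the characteristic polynomial of $2A$, the real eigenvalues of $2A$ are exactly the real roots of $P_s$, so their maxima coincide; hence the largest real eigenvalue of $2A$ is the largest real root of polynomial~$(1)$ when $s\equiv0\pmod 8$ and of polynomial~$(3)$ when $s\equiv4\pmod 8$, which is the claim. One may also note, since $2A>0$, that by Perron--Frobenius its spectral radius is a simple positive eigenvalue, hence automatically that largest real root. I do not expect any real difficulty here: the only points requiring care are the parity bookkeeping of the four floor/ceiling entries and the two sign checks in the $3\times3$ determinant expansions, and the whole argument mirrors the already-established proof of Lemma~7 line for line.
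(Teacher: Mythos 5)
Your proof is correct and follows essentially the same route as the paper: specialize the floor/ceiling entries of $(20)$ according to $s=8r$ or $s=8r+4$ and identify the characteristic polynomial of $2A$ with $-P_s$, the paper merely asserting the outcome of the determinant computation that you carry out explicitly (and correctly). Note that your case-(i) matrix $\begin{pmatrix} r & r+1 & 2r\\ r & r & 2r+1\\ r & r & 2r\end{pmatrix}$ is the correct specialization of $(20)$, whereas the third row of the corresponding matrix displayed in the paper's proof is a misprint.
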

  \begin{proof}
  В случаях $\textit(i)$ и $\textit(ii)$ матрица $A$ принимает вид
  $$
  \begin{pmatrix} r& r+1 & 2r\\ r &r & 2r+1 \\ 2r& r& r+1\end{pmatrix}, \quad\qquad \begin{pmatrix} r+1& r+1 & 2r+1\\ r & r+1 & 2r+2 \\ r & r+1 & 2r+1\end{pmatrix}.
  $$
  Тогда характеристическим для матрицы $2A$ будет многочлен $(1)$ или $(3)$ соответственно.

  %\\ $-{\lambda}^3 + \left(\frac{s}{2}+ 1\right){\lambda}^2 - \left(\frac{s}{4}+\frac{1}{2}\right)\lambda + \left(\frac{s}{8}-\frac{1}{4}\right)$, умножив его на $-1$, получим (2).\\
  %Во втором случае матрица $A$ примет вид
  %$$
  %A=\begin{pmatrix} 2r+2& r+1 & r+1\\ 2r +2 & r & r+1 \\ 2r+1 & r+1 & r+1\end{pmatrix}.
  %$$
  %Характеристический многочлен этой матрицы имеет вид $$-{\lambda}^3 + \frac{s}{2}{\lambda}^2 + %\left(\frac{s}{4}+\frac{1}{2}\right)\lambda - \left(\frac{s}{8}+\frac{1}{4}\right),$$умножив его на $-1$, получим %(4).
  \end{proof}
\newpage
  \begin{flushleft}\textbf{Случай $\textbf{s=4}$.} \end{flushleft}\par
  Необходимо среди чисел $g_m, m\ge 2$ выделить классы
  $$
  \begin{aligned}
  &A_1=\{\:g_m\; | \; m\equiv 0 \:(\mod 2)\:\}\\
  &A_2=\{\:g_m\; | \; m\ge 2\}.
  \end{aligned}
  \eqno(21)
  $$
  Тогда, как и в случае четного $s\ge6$, введем матрицу
  $$
  A=\begin{pmatrix} 1& 2\\ 1& 1
  \end{pmatrix}
  \eqno (22)$$
  Пусть $\lambda$ - наибольшее из собственных значений матрицы $2A$.\\
  Снова рассмотрим интервалы  ${ I_0\subset\ldots\subset I_n}$, числа $i_0,\ldots, i_n$ (12) \\и числа ${a^j_i=\min \{g_k\in A_i\; | \;k\in I_j\}}$, тогда для каждого $m>s+2$

  $$
  g_m\ge 2\begin{pmatrix}1&1\end{pmatrix}\begin{pmatrix}a^0_1\\ a^0_2\end{pmatrix}.
  \eqno (23)
  $$
  если $m\ge 25$, то продолжим неравенство (23):
  $$
  \begin{aligned}
  g_m\ge
  %&2\begin{pmatrix}1&1\end{pmatrix}\begin{pmatrix}a^0_1\\ a^0_2\end{pmatrix}
  2\begin{pmatrix}1&1\end{pmatrix}{(2A)}^n\begin{pmatrix}1\\1\end{pmatrix}\ge
  %\begin{pmatrix}1&1\end{pmatrix}
  {\lambda}^{\left\lceil log_2 {\left\lfloor\frac{m-3}{20}\right\rfloor}\right\rceil}
  %\begin{pmatrix}1\\1\end{pmatrix}
  \gg m^{\log_2 \lambda}
  %\frac{1}{5}{(2(1+\sqrt{2}))}^{\log_2 m}\ge \frac{1}{5} m^{1 + \log_2 (1+\sqrt{2})}.
  \end{aligned}$$

  \begin{flushleft}\textbf{Случай $\textbf{s=2}$.} \end{flushleft}%В этом случае при $a=2$ невозможно применить рекуррентную формулу так, как это происходило в предыдущих случаях: не существует последовательностей $\textbf{u}=(u_1,\ldots,u_n)$, удовлетворяющих условию теоремы, \\таких что ${u_1,u_n\neq 1, 2^2 - 1 }$, континуанты которых равны $2^2$ и $2^5$.

  %При $a,s=2$ примем ${g_1=g_3=g_4=g_6=g_7=\ldots=g_{11}=1}$.\\Тогда ${f_i\ge g_i, \text{ при }}$ ${i=1, 3, 4, 6, 7,\ldots, 11}$, так как существуют последовательности, удовлетворяющие указанным выше условиям, континуанты которых равны соответственно ${2^1, 2^3, 2^4, 2^6, 2^7,\ldots, 2^{11}}$.
  Пусть $a=2$. Для $m=6, 7, \ldots, 11$ положим $g_6=\ldots=g_{11}=1$, так как существуют последовательности, удовлетворяющие условию нашей теоремы, действительно:
  $$
  \begin{aligned}
  %2^1&=\langle2\rangle,\\
  %2^3&=\langle2, 1, 2\rangle,\\
  %2^4&=\langle2, 3, 2\rangle,\\
  2^6&=\langle2, 1, 3, 1, 1, 2\rangle,\\
  2^7&=\langle2, 1, 2, 1, 1, 1, 1, 2\rangle,\\
  2^8&=\langle2, 3, 3, 1, 3, 2\rangle,\\
  2^9&=\langle2, 3, 2, 1, 1, 1, 3, 2\rangle,\\
  2^{10}&=\langle2, 3, 2, 3, 1, 1, 3, 2\rangle,\\
  2^{11}&=\langle2, 1, 1, 2, 3, 3, 1, 1, 2, 2\rangle.\\
  \end{aligned}
  $$
  Для $m\ge 12$ значения $g_m$ вычисляются по рекуррентной формуле (9), которая теперь принимает вид
  %Для четного $m$ рекуррентная формула для $g_m$ содержит 2 слагаемых, для нечетного $m$ - одно слагаемое. Поэтому матрица $A$ имеет вид:
  $$
  g_m \ge 2g_{\left\lfloor\frac{m}{2}\right\rfloor}.
  %A=
  %\begin{pmatrix}
  %1&1\\
  %0&1
  %\end{pmatrix}
  \eqno(24)$$
  %Собственные значения матрицы $A$ равны 1.\\
  %Формула для необходимого числа применений рекурсивной $n$ должна измениться, так как
  В этом случае, в обозначениях (12), ${i_n\in\{6, 7, 8, 9, 10, 11\}}$. Оценим снизу максимальное число $n=n(m)$ применений рекуррентной формулы $(9)$ для вычисления $g_m$. Используя такие же рассуждения, как и при доказательстве леммы $5$, получаем:
  $$
  {m\ge 2^n(i_n +1),}
  $$
  откуда
  $$
  n\ge \left\lceil\log_2 \frac{m+1}{12} \right\rceil.
  $$

  %В этом случае, в обозначениях (10), ${i_n\in\{6, 7, 8, 9, 10, 11\}}$. Тогда для вычисления $g_m$, количество применений
  %$$
  %n=\left\lfloor\log_2 \left(\frac{\left\lfloor\frac{m-s+1}{2}\right\rfloor + s}{2s+2} %\right)\right\rfloor==\left\lfloor\log_2\left(\frac{\left\lfloor\frac{m-1}{2}\right\rfloor + %2}{13}\right)\right\rfloor.
  %$$
  Тогда для $m\ge 12$ неравенство (24) можно продолжить:
  $$
  g_m\ge 2^n g_{i_n}\ge 2^{\left\lceil\log_2 \frac{m+1}{12} \right\rceil}\ge \frac{m+1}{12}.
  %  2\left(0, 1\right){\begin{pmatrix}1&1\\0 & 1\end{pmatrix}}^n 2^n \begin{pmatrix}1\\1\end{pmatrix}=
  %2\left(0, 1\right)\begin{pmatrix}1&n\\0& 1\end{pmatrix} 2^n \begin{pmatrix}1\\1\end{pmatrix}\ge 2^{n+1}\ge %\frac{1}{26} m.
  $$
  Если $a>2$, положим $g_2=g_3=g_4=1$ (так как существуют последовательности, удовлетворяющие условию теоремы, континуанты которых равны, соответственно, ${a^2, a^3, a^4}$ с неполными частными, ограниченными $a^2$). Для  ${m\ge 5}$ значения $g_m$ вычисляются по рекуррентной формуле (24). Максимальное количество применений рекуррентной формулы ${n=\left\lceil\log_2 \frac{m+1}{5}\right\rceil}$. Таким образом, при $m\ge 5$ получаем ${g_m\ge \frac{m+1}{5}}$.

   %то оценку $g_m$ можно проводить не внося изменений метод, то есть принять $g_1=\ldots=g_{4}=1$, а для всех $m\ge 5$ значения $g_m$ вычислять по формуле (8). Однако, если мы примем $g_1=g_2=\ldots=g_{11}=1$ и будем применять рекуррентную формулу для $m\ge 12$, то при $m\rightarrow\infty$ получим отличие только на константу, порядок роста не изменится.
  \par\medskip

  \begin{flushleft}\textbf{Случай нечетного $\textbf{s}\ge\textbf{3}$.} \end{flushleft}
  \par В рекуррентную формулу для ${g_m, \text{ где } m>s+2}$ входит $\frac{s+1}{2}$ слагаемое, не зависимо от четности $m$. Таким образом, для ${m>5s+5}$ справедлива следующая оценка:
  $$
  \begin{aligned}
  &g_m=2\left(g_{\left\lfloor\frac{m}{2}\right\rfloor} + \ldots + g_{\left\lfloor\frac{m-s+1}{2}\right\rfloor}\right)\ge2\frac{s+1}{2}\min_{v\in \left[i_0, i_0 + \frac{s+1}{2}\right]} g_v\ge (s+1)^2\min_{v\in \left[i_1, i_1 + \frac{s+1}{2}\right] } g_v \ge\ldots\ge\\&\ge(s+1)^n\ge(s+1)^{\left\lceil\log_2 \left\lfloor\frac{m-s+1}{4s+4}\right\rfloor\right\rceil}\ge
  \frac{(s+1)^{log_2 m}}{(s+1)^{3+log_2 (s+1)}}=\frac{m^{log_2 (s+1)}}{(s+1)^{3+log_2 (s+1)}}.
  \end{aligned}
  $$
  При оценке было использовано, что интервалы $I_0\subset I_1\subset\ldots\subset I_n$ изменения индексов слагаемых, входящих в рекуррентную формулу для $g_m$ соответствуют (12) и ${i_k, k\in{1, 2,\ldots, n}}$. Также было использовано, что $n$ - максимальное количество применений формулы (9) - может быть оценено согласно лемме 5.
  %\par\medskip
  %Таким образом, теорема 1 полностью доказана.
  %\end{proof}
  %Таким образом, если $i_0\neq i_n$, то неравенство (7) можно продолжить:
  %$$
  %\ge  2(\lfloor\frac{q}{2}\rfloor, \lceil\frac{q}{2}\rceil, q)(2A)^n\begin{pmatrix} a^n_1\\ a^n_2\\ a^n_3 %\end{pmatrix}\ge
  %\end{itemize}

  Теорема 1 полностью доказана.
  \end{proof}

  \begin{lemm}[\textbf{О сравнении корней многочленов (1)-(4)\text{ и } (7)}]
  Среди наибольших действительных корней многочленов (1)-(4) и (7) наименьшим при
  %$s\rightarrow\infty$
  всех достаточно больших $s$ является корень многочлена (3).
  \end{lemm}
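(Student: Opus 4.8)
The plan is to pin down $\lambda_3$, the largest real root of polynomial $(3)$, inside a short explicit interval, and then to exploit that the \emph{differences} of polynomials $(1)$, $(2)$, $(4)$ with $(3)$ collapse to degree at most $2$, so the sign of each at $\lambda_3$ can be found by hand. For brevity write $p_1,p_2,p_3,p_4$ for the polynomials from cases $(1)$–$(4)$, so that $\lambda_i$ (notation of the Remark) is the largest real root of $p_i$. The polynomial from case $(7)$ is $\lambda-s-1$, with unique root $s+1$; we will show $\lambda_3<s+1$, so case $(7)$ is not the minimal one and it suffices to see that $\lambda_3$ is the smallest of $\lambda_1,\dots,\lambda_4$.

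First I would locate $\lambda_3$. Direct substitution gives $p_3(s)=-s^{2}+s+4$, negative for $s\ge 3$, and $p_3(s+1)=3>0$; moreover, for $\lambda>s+1$ one puts $\lambda=s+1+x$ with $x>0$ and obtains $p_3(\lambda)=x^{3}+(2s+1)x^{2}+(s^{2}+s-1)x+3>0$. Since $p_3$ has positive leading coefficient, its largest real root therefore lies in the interval $(s,\,s+1)$; in particular $\lambda_3<s+1$, which settles case $(7)$.

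Then I would compute the three differences, which turn out to be strikingly simple:
$$p_1(\lambda)-p_3(\lambda)=2\bigl(\lambda^{2}-s\lambda-s-2\bigr),\quad p_2(\lambda)-p_3(\lambda)=2(\lambda-s-1),\quad p_4(\lambda)-p_3(\lambda)=2\bigl(\lambda^{2}-(s+1)\lambda-1\bigr).$$
Since $p_3(\lambda_3)=0$, we have $p_i(\lambda_3)=\bigl(p_i-p_3\bigr)(\lambda_3)$ for $i=1,2,4$, and using only $s<\lambda_3<s+1$ each of these is negative: $\lambda_3-s-1<0$; also $\lambda_3^{2}-s\lambda_3-s-2=\lambda_3(\lambda_3-s)-(s+2)<(s+1)-(s+2)<0$; and $\lambda_3^{2}-(s+1)\lambda_3-1=\lambda_3(\lambda_3-s-1)-1<-1<0$. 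For each $i\in\{1,2,4\}$ the polynomial $p_i$ has positive leading coefficient, hence is strictly positive to the right of its own largest real root $\lambda_i$; since $p_i(\lambda_3)<0$, this forces $\lambda_3<\lambda_i$. Together with $\lambda_3<s+1$, the root of $(3)$ is strictly the smallest of the five largest roots, which proves the lemma.

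I do not expect a real obstacle here: everything rests on the fact that $p_i-p_3$ drops to degree $\le 2$ and on the crude bracketing $\lambda_3\in(s,s+1)$, and it is valid for every $s\ge 3$, in particular for all sufficiently large $s$. The only points needing care are the sign bookkeeping in the three one-line estimates and the check that the root isolated in $(s,s+1)$ really is the \emph{largest} real root of $p_3$ — which is precisely the role of the verification $p_3(s+1+x)>0$ for $x>0$. One could alternatively extract the coarser part of the ordering from the Remark — $\lambda_1,\lambda_4=s+1+O(s^{-2})$ with remainder in $(-s^{-2},0)$, while $\lambda_2,\lambda_3=s+1-3s^{-2}+3s^{-3}+\cdots$, so $\lambda_2,\lambda_3<\lambda_1,\lambda_4$ for $s$ large — and then obtain $\lambda_3<\lambda_2$ from $p_2-p_3=2(\lambda-s-1)$ being negative at $\lambda_3<s+1$; but the direct argument above needs no asymptotics at all.
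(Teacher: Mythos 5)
Your proof is correct, and it takes a genuinely different route from the paper's. The paper proves this lemma by computing asymptotic expansions of the largest roots, $\lambda_i=s+1-c_i s^{-2}+\dots$, pushed to order $s^{-4}$ (which is where $\lambda_2$ and $\lambda_3$ first separate), and then comparing the expansions; this is why the statement is phrased ``for all sufficiently large $s$'', and the bookkeeping of the $s^{-4}$ coefficients there is delicate. You instead bracket $\lambda_3$ in $(s,s+1)$ by the exact sign checks $p_3(s)=-s^2+s+4<0$, $p_3(s+1)=3>0$ and $p_3(s+1+x)=x^3+(2s+1)x^2+(s^2+s-1)x+3>0$ for $x>0$, and then observe that $p_i-p_3$ has degree at most $2$, so $p_i(\lambda_3)=(p_i-p_3)(\lambda_3)$ can be shown negative by hand from $s<\lambda_3<s+1$ alone; positivity of $p_i$ to the right of its largest root then forces $\lambda_3<\lambda_i$. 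I verified the three differences and the three sign estimates; they are all correct, as is the disposal of case (7) via $\lambda_3<s+1$. Your argument is purely algebraic, needs no asymptotics, and actually establishes the conclusion for every $s\ge 3$ rather than only for $s$ large, so it is both simpler and slightly stronger than the paper's; what it does not deliver (and the paper's expansion does) is the quantitative information recorded in the Remark, namely the explicit form $\lambda_i=s+1+O(s^{-2})$ used elsewhere for the constant $C_1(s)$ and for Theorem 2.
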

  \begin{proof}
  Пусть ${\lambda}_1, {\lambda}_2, {\lambda}_3, {\lambda}_4, {\lambda}_7$ - наибольшие из корней многочленов (1), (2),(3),(4) и (7) соответственно. Тогда ${\lambda}_i \in\left(s, s+1\right)$, при $i\in\{1, 2, 3, 4\}$, то есть ${\lambda}_i=\underline{\underline{O}} (s)$. Следовательно, для ${\lambda}_1$:
  $$
  \begin{aligned}
  &{{\lambda}_1}^3=s{{\lambda}_1}^2 +s{\lambda}_1 + s,\\
  &{\lambda}_1=s +\frac{s}{{\lambda}_1} + \bar{\bar{o}}(1), \text{ получаем }\\
  &{\lambda}_1=s + 1 + \varepsilon,\qquad \text{ где } \varepsilon=\underline{\underline{O}} \left(\frac{1}{s}\right).
  \end{aligned}
  $$
  Поставляя это выражение в многочлен (1) и приравнивая к нулю, получаем
  $$
  \begin{aligned}
  &{\left(s + 1 +\varepsilon\right)}^{3} - s{\left(s + 1 +\varepsilon\right)}^2 - s\left(s + 1 +\varepsilon\right) - s=0, \text{ или, эквивалентно, }\\
  &{\varepsilon}^3 + (2s+3){\varepsilon}^2 + (s^2 + 3s + 3)\varepsilon +1=0, \text{ откуда }\\
  &\varepsilon=-\frac{1}{(s^2 + 3s + 3)} - \underbrace{\frac{(2s+3){\varepsilon}^2}{(s^2 + 3s + 3)} - \frac{{\varepsilon}^3}{(s^2 + 3s + 3)}}_{\underline{\underline{O}}\left(\frac{1}{s^3}\right)}=-\frac{1}{s^2} + \underline{\underline{O}}\left(\frac{1}{s^3}\right).
  \end{aligned}
  $$
  Следовательно,
  $$
  {\lambda}_1=s + 1 -\frac{1}{s^2} + \underline{\underline{O}}\left(\frac{1}{s^3}\right)=s + 1 + \frac{{\theta}_1}{s^2}, $$
  где ${\theta}_1\in(-1, 0).$

  Проводя аналогичные рассуждения для ${\lambda}_2, {\lambda}_3, {\lambda}_4$
  %${\lambda}_2$ - наибольшего из действительных корней многочлена (2),\\
  %${\lambda}_3$ - наибольшего из действительных корней многочлена (3),\\
  %${\lambda}_4$ - наибольшего из действительных корней многочлена (4),\\
  получаем:
  $$
  \begin{aligned}
  {\lambda}_2=s + 1 -\frac{3}{s^2} + \underline{\underline{O}}\left(\frac{1}{s^3}\right),\\
  {\lambda}_3=s +1 -\frac{3}{s^2} + \underline{\underline{O}}\left(\frac{1}{s^3}\right),\\
  {\lambda}_4=s + 1 -\frac{1}{s^2} + \underline{\underline{O}}\left(\frac{1}{s^3}\right)=s + 1 +\frac{{\theta}_4}{s^2}.
  \end{aligned}
  $$
  где ${\theta}_4\in(-1, 0)$

  Таким образом, необходимо сравнить при $s\rightarrow\infty$ величины ${\lambda}_2$ и ${\lambda}_3$.\\ Для этого положим
  $$
  \begin{aligned}
  &{\lambda}_2=s + 1 -\frac{3}{s^2} + {\varphi}_2,\\
  &{\lambda}_3=s + 1 -\frac{3}{s^2} + {\varphi}_3, \text{ где } {\varphi}_2, {\varphi}_3=\underline{\underline{O}}\left(\frac{1}{s^3}\right),
  \end{aligned}
  $$
  и подставим значения ${\lambda}_2,\text{ и } {\lambda}_3$ в соответствующие многочлены.
  Получим
  $$
  \begin{aligned}
  &{\varphi}_2=\frac{3}{s^3} + \frac{3}{s^4} + \underline{\underline{O}}\left(\frac{1}{s^5}\right)=\frac{3}{s^3} + \frac{{\theta}_2}{s^4}, \text{ где } {\theta}_2 \in(0, 3);\\
  &{\varphi}_3=\frac{3}{s^3} -\frac{6}{s^4} + \underline{\underline{O}}\left(\frac{1}{s^5}\right)=\frac{3}{s^3} -\frac{{\theta}_3}{s^4}, \text{ где } {\theta}_3 \in (-9, -6).\
  \end{aligned}
  $$
  Таким образом, учитывая что для каждого ${i\in\{1, 2, 3, 4\}}$ значение ${\lambda}_i<s+1$, получаем, что наименьшим из ${{\lambda}_1, {\lambda}_2,{\lambda}_3,{\lambda}_4 \text{} {\lambda}_7}$ при $s\rightarrow\infty $ является
  $$
  {\lambda}_3=s +1 -\frac{3}{s^2} + \frac{3}{s^3} -\frac{{\theta}_3}{s^4}, \qquad {\theta}_3 \in (-9, -6).
  $$
  \begin{rem}
  Более точное выражение для ${\lambda}_3$:
  $$
  \qquad {\lambda}_3=s +1 -\frac{3}{s^2} + \frac{3}{s^3} -\frac{6}{s^4} - \frac{9}{s^5} - \frac{15}{s^6} + \underline{\underline{O}}\left(\frac{1}{s^7}\right).
  \eqno(25)
  $$
  \end{rem}
  \end{proof}

  %Таким образом, теорема 1 полностью доказана.
  %\end{proof}

  \begin{proof}[Доказательство теоремы 2]
  $\text{ }$\par
  Согласно лемме 8, найдется $s_0\in\mathbb{N}$, такое что для всех $s\ge s_0$ наименьшим среди чисел ${{\lambda}_1, {\lambda}_2, {\lambda}_3, {\lambda}_4, s+1}$ является ${\lambda}_3$. Рассмотри произвольное ${N\in\mathbb{N}, N\ge a^{s_0}}$, и применим теорему 1 для ${s=\left\lfloor\log_2 N\right\rfloor}$.
  %   b{N}, \text{ где } N\ge a^3 - 1}$, \\положим ${s=\lfloor \log_a (N + 1) \rfloor}$ и применим теорему 1. Так как любого $s\ge 3$ при $m\rightarrow\infty$ выполнено:
  %\begin{itemize}
  %\item[ 1.]наименьшим среди ${{\lambda}_1, {\lambda}_2,{\lambda}_3,{\lambda}_4 }$ является ${\lambda}_3$,
  %\item[ 2.]${1+\log_2 \:{\lambda_3} \le \log_2 (s+1)}$
  %\item[ 3.]при ${s=4, \quad {\lambda_3}< 1+\sqrt{2}}$,
  %\end{itemize}
  Тогда для достаточно больших $m$ получим:
  $$
  f_m\ge 4g_m\ge C_3 (N) m^{log_2 \lambda_3},
  $$
  где $ C_3 (N)$ имеет вид:
  $$
  C_3\ge(s+1)^{-3-\log_{2} (s+1)}\ge \frac{1}{2^{5\log_2 {\log_2 N}}}.
  $$
  \end{proof}

  \begin{proof}[Доказательство теоремы 3]
  $\text{ }$\par
  Как и при доказательстве теоремы 1 выделим среди чисел $g_m, m\ge 1$ классы $A_1\subset A_2\subset A_3$ (10).
  Матрица $A$ будет иметь вид:
  $$
  A=\begin{pmatrix}
  2&1&1\\2&0&1\\1&1&1
  \end{pmatrix}
  \eqno(26)$$
  В основе доказательства лежит
  \begin{lemm}
  Для достаточно большого $m$ можно утверждать, что хотя бы на одном из 4-х последовательных шагов вычисления $g_m$ по рекуррентной формуле (9) встретилось слагаемое, индекс которого сравним с 5 по модулю 8.
  \end{lemm}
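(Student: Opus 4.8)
The plan is to reduce the statement to a finite arithmetic assertion about how the indices of the summands produced by $(9)$ move modulo $8$ under the halving involved in $(9)$, and then to settle that assertion by a bounded case check.

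First I would recall, exactly as in the proof of Theorem~1 for $s=6$, that at each level of the expansion of $g_m$ the indices of the summands produced by $(9)$ occupy a window $W=[i,i+4]$ of five consecutive integers, and that one level of $(9)$ replaces $i$ by $\lfloor(i-5)/2\rfloor$; here the first window has $i=\lfloor(m-5)/2\rfloor$, and, $m$ being large, there are many windows before one reaches the base indices $\{2,\dots,8\}$. The key remark is that $[i,i+4]$ contains an integer $\equiv5\pmod 8$ unless $i\equiv0,6,7\pmod 8$; call a window \textit{bad} in these three cases and \textit{good} otherwise. The lemma is then the assertion that four consecutive windows are never all bad.

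Next I would tabulate the transition $i\bmod 8\mapsto\lfloor(i-5)/2\rfloor\bmod 8$. Because the map halves, it depends on $i$ modulo $16$, so this is a routine enumeration of $16$ residues (the one subtlety being the wrap-around when $i\bmod 16<5$, where $i-5$ picks up an extra $16$ before the floor is taken). The needed consequences are: from $i\equiv0$ the next residue is $1$ or $5$; from $i\equiv7$ it is $1$ or $5$; from $i\equiv6$ it is $0$ or $4$; and a bad residue is reached only as $5\text{ or }6\mapsto0$, as $1\text{ or }2\mapsto6$, or as $3\text{ or }4\mapsto7$.

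From this the conclusion follows at once: the only transition between two bad residues is $6\mapsto0$; moreover $6$ is reached only from the good residues $1,2$, and from $0$ one always moves to the good residue $1$ or $5$. Hence a maximal run of consecutive bad windows has length at most $2$ (and is then necessarily of the shape $6,0$). Therefore among any three --- a fortiori any four --- consecutive windows at least one is good, i.e.\ contains a summand with index $\equiv5\pmod 8$, which is what the lemma asserts. The step I expect to need the most care is not the residue arithmetic (a finite verification) but the bookkeeping of the second paragraph: one must make precise the identification of ``a summand appearing at a given step of $(9)$'' with membership in the window $[i,i+4]$ from the proof of Theorem~1, and check that for all large enough $m$ the four consecutive steps in question lie strictly inside the range where $(9)$ is applied, so that the recursion $i\mapsto\lfloor(i-5)/2\rfloor$ is valid throughout; given that, the case analysis above closes the argument.
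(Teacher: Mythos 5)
Your overall strategy --- reduce the lemma to a finite verification of how the indices of the summands move modulo a power of $2$ under the halving in $(9)$ --- is the same as the paper's, which simply enumerates the residues of $m$ modulo $128$. Your residue arithmetic for the transition $i\mapsto\lfloor(i-5)/2\rfloor$ is correct, but the reduction rests on a false premise: the set of indices appearing at a given step is \emph{not} a window of five consecutive integers. For $s=6$ the recurrence $(9)$ produces from a single $g_l$ the indices $\lfloor(l-5)/2\rfloor,\dots,\lfloor l/2\rfloor$, i.e.\ only $3$ or $4$ consecutive integers; taking the union over one step, the window at step $j$ is $[i_j,h_j]$ with $i_{j+1}=\lfloor(i_j-5)/2\rfloor$ and $h_{j+1}=\lfloor h_j/2\rfloor$, and its cardinality starts at $3$ or $4$ and only gradually stabilizes between $5$ and $6$. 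Whether $[i_j,h_j]$ meets the residue class $5\pmod 8$ therefore depends on the length $h_j-i_j$ as well as on $i_j\bmod 8$, so your good/bad classification by $i\bmod 8$ alone is not valid at the early steps (nor, in the other direction, at steps where the window has six elements).

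Concretely, your intermediate conclusion that a maximal run of bad windows has length at most $2$ --- so that three consecutive steps would already suffice --- is false, and the paper's own worked case refutes it: for $m=83+128k$ the first three windows are $\{39,40,41\}+64k$, $\{17,\dots,20\}+32k$ and $\{6,\dots,10\}+16k$, none of which contains an index $\equiv5\pmod 8$, so there are three consecutive bad steps; only the fourth window $\{0,\dots,5\}+8k$ is good. Your table labels the second of these good (bottom residue $1$ would give $i+4\equiv5$, but the window stops at $i+3$) and the fourth bad (bottom residue $0$, but that window has six elements and does reach $5+8k$). So the bound of four steps in the lemma is sharp and cannot follow from a ``length $\le 2$'' claim. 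To repair the argument you must carry the window length along with the residue of $i$, equivalently track the windows modulo $16$, $32$, $64$ at the successive steps --- which is exactly what the paper's brute-force enumeration of $m$ modulo $128$ accomplishes.
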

  \begin{proof}[]
  Доказательство проходит перебором всевозможных остатков при делении $m$ на 128.
  Для примера рассмотрим "наихудший" ${ }$ случай, то есть когда слагаемое,  индекс которого сравним с 5 по модулю 8, появляется на 4-м шаге: пусть ${m=83 + 128k, k\in\mathbb{N}}$, тогда
  $$
  \begin{aligned}
  &g_{83+128k}=2\left(g_{41+64k}+g_{40+64k}+g_{39+64k}\right)=\\
  &=4\left(2g_{20+32k}+3g_{19+32k}+3g_{18+32k}+2g_{17+32k}\right)=\\
  &=8\left(2g_{10+16k}+8g_{9+16k}+10g_{8+16k}+10g_{7+16k}+5g_{6+16k}\right)=\\
  &=16\left(2g_{5+8k}+20g_{4+8k}+35g_{3+8k}+35g_{2+8k}+25g_{1+8k} +5g_{8k}\right),
  \end{aligned}
  $$
  \end{proof}
  и интересующее нас слагаемое  - первое в полученном выражении.

  %Обозначим $g_k\in A_i\searrow A_j$, если число $g_k$ учтено как член класса $A_i$, но не учтено, как член класса %$A_j$.

  \begin{lemm}
  Пусть для достаточно большого $m$ при вычислении $g_m$ по формуле (9), появилось слагаемое с индексом, сравнимым с 5 по модулю 8. Тогда при следующем применении формулы (9) появится слагаемое, заведомо принадлежащее классу $A_2$, но не учтенное, как принадлежащее этому классу (то есть учтенное как число из $A_3$).%, при вычислениях с помощью матрицы $A$.
  \end{lemm}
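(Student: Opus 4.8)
The plan is to reduce the claim to an elementary computation with residues, using the classes $A_1\subset A_2\subset A_3$ from (10) and the accounting matrix $A$ from (26) (which is (13) with $q=1$, since $s=6=4\cdot 1+2$). Recall how that matrix is read: when a further step of (9) is applied to a summand $g_j$, the row of $A$ attached to $g_j$ lists, left to right, the number of resulting summands \emph{guaranteed} to lie in $A_1$, the number guaranteed to lie in $A_2$ but not already counted in $A_1$, and the number merely booked as lying in $A_3$. A summand of even index is processed by the first row $(2,1,1)$, and a summand whose index is $\equiv 1\pmod 4$ (hence in $A_2\setminus A_1$, not in $A_1$) is processed by the second row $(2,0,1)$.

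First I would fix the summand $g_j$ in the expansion of $g_m$ with $j\equiv 5\pmod 8$. Since $5\equiv 1\pmod 4$, we have $g_j\in A_2\setminus A_1$, so in the bookkeeping it is handled by the second row $(2,0,1)$: of its three successors, two are charged to $A_1$, none extra to $A_2$, and one to $A_3$. Next I would actually apply (9) to $g_j$. As $j$ is odd, write $j=2t+1$; for $s=6$ the sum over an odd index has three terms, so $g_j\ge 2(g_t+g_{t-1}+g_{t-2})$. From $j\equiv 5\pmod 8$ one gets $t\equiv 2\pmod 4$, so $t$ and $t-2$ are even (these are the two successors booked in $A_1$), while $t-1\equiv 1\pmod 4$, hence $g_{t-1}\in A_2\setminus A_1$. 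Thus there genuinely is a new summand lying in $A_2$ outside $A_1$, namely $g_{t-1}$, whereas the second row of $A$ had placed it in the $A_3$-slot (the entry $1$ of $(2,0,1)$) rather than the $A_2$-slot (the entry $0$). This is exactly the undercounted summand the lemma asserts.

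The point deserving care is why the $0$ sits in position $(2,2)$ of $A$ in the first place: it comes from the worst case $m\equiv 1\pmod 8$, where the lone odd successor has index $\equiv 3\pmod 4$ and is therefore really only in $A_3$; the hypothesis $j\equiv 5\pmod 8$ is precisely what excludes this worst case and forces the extra $A_2$-membership. So the main — essentially the only — obstacle is to line up the residue of $j$ after one halving step with the definition (10) of the classes and with the row of $A$ charged to $g_j$; once those are matched the conclusion is immediate, and it is this periodically recurring recovered unit (combined with Lemma 10) that yields the strengthened exponent in Theorem 3.
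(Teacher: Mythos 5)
Your residue computation is correct and is essentially the paper's: for $j=8k+5$ the three successors in (9) have indices $4k+2$, $4k+1$, $4k$, and since $4k+1\equiv 1\pmod 4$ the odd successor genuinely lies in $A_2$, so a slot that the bookkeeping reserves for a mere $A_3$-element is occupied by an element of $A_2$. The gap is in the sentence ``since $g_j\in A_2\setminus A_1$, in the bookkeeping it is handled by the second row $(2,0,1)$''. Which row of $A$ processes $g_j$ at the next step is determined not by the true class of $g_j$ but by the slot to which $g_j$ was charged when \emph{its} parent was expanded, and an odd-index summand can perfectly well be charged to the $A_3$-slot: for instance, if the parent index is $\equiv 1\pmod 4$, the parent's row is itself $(2,0,1)$ and its unique odd successor --- which may be exactly your $g_j$ --- occupies the $A_3$-slot, so that $g_j$ is subsequently processed by the third row $(1,1,1)$, not the second. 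Your argument as written does not cover this case, and the case is not vacuous.

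The paper's proof runs the same computation but explicitly splits into the two cases ``$g_l$ was counted as a member of $A_2$'' and ``$g_l$ was counted as a member of $A_3$''. The missing case is easy to repair in the same spirit: against the row-3 guarantee $(1,1,1)$ the actual successors supply two elements of $A_1$ and one of $A_2\setminus A_1$, so the successor charged to the $A_3$-slot lies in $A_1\subset A_2$, and the promised undercounted $A_2$-element again appears. You should add this second case (or phrase the argument so that it applies to both rows at once); otherwise the correction matrix with rows $(0,1,-1)$ in Lemma 11 is justified only along those branches of the expansion where the $\equiv 5\pmod 8$ summand happened to land in the $A_2$-slot.
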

  \begin{proof}
  Если $l=5 \:(\mod 8)$, то $l$ - нечетно. Следовательно, $g_l$ могло быть учтено как член класса $A_2$ или $A_3$. Если оно было учтено как член $A_2$, то на следующем шаге мы получим 2 элемента из класса $A_1$ и один из $A_2$ вместо 2 элементов из $A_1$ и одного из $A_3$. Если же $g_l$ учтено как элемент класса $A_3$, то получаем 2 элемента из $A_1$ и один из $A_2$ вместо одного из $A_1$, одного из $A_2$ и одного из $A_3$. Заметив, что $A_1\subset A_2$, получим утверждение леммы.
  %Пусть $g_l$ - это число, индекс которого сравним с 5 по модулю 8. Рассмотрим класс числа, "давшего" при применении формулы (8) слагаемое $g_l$.

  %1. Пусть это класс $A_1$. Тогда $g_l\in A_2\searrow A_1$,  вместе с $g_l$ появятся 2 слагаемых класса $A_1$ и одно слагаемое класса $A_3\searrow A_2$, применяя формулу еще раз, и принимая во внимание, что $l\equiv5 (\mod 8)$, получим:
  % \\7 слагаемых класса $A_1$, 4 слагаемых $A_2\searrow A_1$ и 3 слагаемых $A_3\searrow A_2$.\\
  % Если же не учитывать, что $l\equiv5 (\mod 8)$, то получится:\\
  % 7 слагаемых класса $A_1$, 3 слагаемых $A_2\searrow A_1$ и 4 слагаемых $A_3\searrow A_2$.

  %2. Пусть это класс $A_2$. Тогда $g_l\in A_3\searrow A_2$. Приняв во внимание, что $l=5 (\mod 8)$, при следующем применении формулы (8) получим:\\
  % 6 слагаемых класса $A_1$, 3 слагаемых $A_2\searrow A_1$ и 2 слагаемых $A_3\searrow A_2$;\\
  % Если не учитывать, что $l\equiv5 (\mod 8)$, то получится:\\
  % 5 слагаемых класса $A_1$, 3 слагаемых $A_2\searrow A_1$ и 3 слагаемых $A_3\searrow A_2$.\\
  % Так как $A_1\subset A_2$, то утверждение верно.

  %3. Пусть это класс $A_3$. Тогда $g_l\in A_2$. Приняв во внимание, что $l=5 (\mod 8)$, при следующем применении (8) получим:\\
  % 5 слагаемых класса $A_1$, 3 слагаемых $A_2\searrow A_1$ и 2 слагаемых $A_3\searrow A_2$;\\
  % Если не учитывать, что $l\equiv5 (\mod 8)$, то получится:\\
  % 5 слагаемых класса $A_1$, 2 слагаемых $A_2\searrow A_1$ и 3 слагаемых $A_3\searrow A_2$.
  \end{proof}

  %\begin{rem}
  %Сформулируем полученный в лемме результат в терминах матриц. Пусть на $i-$ом шаге вычисления $g_m$ возникло слагаемое $g_l, \text{ где } l\equiv5 \:(\mod 8)$. Согласно лемме 10, на следующем шаге появится слагаемое, заведомо лежащее в классе $A_2$, вместо слагаемого из $A_3\searrow A_2$. Таким образом, в обозначениях теоремы 1, это можно записать:
  %$$
  %\begin{pmatrix}a^{0}_1\\ \rule{0pt}{5mm}a^{0}_2\\ \rule{0pt}{5mm}a^{0}_3\end{pmatrix}\ge
  %2^{i+1}\left({\begin{pmatrix}2&1&1\\ \rule{0pt}{5mm}2&0&1\\ \rule{0pt}{5mm}1&1&1 \end{pmatrix}}^{i+1}+\begin{pmatrix}0&1&-1\\ %\rule{0pt}{5mm}0&1&-1\\ \rule{0pt}{5mm}0&1&-1 \end{pmatrix}\right)\begin{pmatrix}a^{i}_1\\ \rule{0pt}{5mm}a^{i}_2\\ %\rule{0pt}{5mm}a^{i}_3\end{pmatrix}.
  %\eqno(27)$$
  %\end{rem}

  \begin{lemm}
  Для достаточно большого $m$, при $0\le i\le n-5-5t$ ($t\in\mathbb{N}_0$, $n$ определено в ~(12)) верно:
  %после 8 применений формулы (8) при вычислении ${(a^{i-1}_1, a^{i-1}_2, a^{i-1}_3)}^{\perp}, i\ge 1$:
  $$
  %\begin{pmatrix}x_1& x_2& x_3\end{pmatrix}
  \begin{pmatrix}
  a^{i}_1\\ \rule{0pt}{5mm}a^{i}_2\\ \rule{0pt}{5mm}a^{i}_3
  \end{pmatrix}
  \ge {2^{5}B}^t
  %\begin{pmatrix}x_1& x_2& x_3\end{pmatrix}
  %\left({\begin{pmatrix}2&1&1\\ \rule{0pt}{5mm}2&0&1\\ \rule{0pt}{5mm}1&1&1 \end{pmatrix}}^{5}+\begin{pmatrix}0&1&-1\\ \rule{0pt}{5mm}0&1&-1\\ %\rule{0pt}{5mm}0&1&-1 \end{pmatrix}\right)%\begin{pmatrix}a^{i}_1\\a^{i}_2\\a^{i}_3\end{pmatrix}
  \left(2A\right)^5
  \begin{pmatrix}
  a^{i+5t+5}_1\\ \rule{0pt}{5mm}a^{i+5t+5}_2\\ \rule{0pt}{5mm}a^{i+5t+5}_3
  \end{pmatrix},
  \eqno(27)$$
  где
  $$
  %\text{где }
  B=A^5 + \begin{pmatrix}0 &1 & -1\\ \rule{0pt}{5mm}0 & 1 & -1\\ \rule{0pt}{5mm}0 & 1 & -1\end{pmatrix}$$
  % - матрица, не меньшая чем $\bar{A}$,
  %произойдет умножение ${(a^{i-9}_1, a^{i-9}_2, a^{i-9}_3)}^{\perp}$ на матрицу не меньшую, чем $B$, где
  %$$
  %\bar{A}=2^8\begin{pmatrix}
  %12958&5655&7276\\
  %10072&4396&5656\\
  %10087&4415&5682
  %\end{pmatrix}.
  %$$
  \end{lemm}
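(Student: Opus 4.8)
\emph{Proof plan.} The natural approach is induction on $t$, the assertion for fixed $t$ being exactly $(27)$. Abbreviate $v^{(j)}=(a^{j}_{1},a^{j}_{2},a^{j}_{3})^{\top}$ and $P=\begin{pmatrix}0&1&-1\\0&1&-1\\0&1&-1\end{pmatrix}$, so that $B=A^{5}+P$. Two elementary facts will be used throughout. First, since $A_{1}\subset A_{2}\subset A_{3}$ and every continuant is $\ge 1$, each $v^{(j)}$ lies in the cone $\mathcal C=\{v:\ v_{1}\ge v_{2}\ge v_{3}\ge 1\}$. Second, computing the first powers of $A$ one checks that every entry of $A^{k}$ is $\ge 1$ and $\mathrm{row}_{1}(A^{k})\ge \mathrm{row}_{2}(A^{k})\ge \mathrm{row}_{3}(A^{k})$ componentwise for every $k\ge 2$; hence both $A$ and $B=A^{5}+P$ map $\mathcal C$ into itself, which is what makes the inductive composition legitimate. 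The base case $t=0$ of $(27)$ reads $v^{(i)}\ge(2A)^{5}v^{(i+5)}$ and is just five successive applications of $(13)$.

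For the inductive step, fix $i$ with $0\le i\le n-5-5t$; it suffices to prove the single \emph{block inequality} $v^{(i)}\ge 2^{5}Bv^{(i+5)}$, valid whenever the indices occurring in the expansions $I_{i+1},\dots,I_{i+5}$ are large enough for Lemma $9$ to apply (this is precisely why in $(27)$ the plain factor $(2A)^{5}$ is reserved for the five deepest expansions, for which Lemma $9$ need not hold). Indeed, granting the block inequality and applying $(27)$ for $t-1$ from level $i+5$ — legitimate because $i+5\le n-5-5(t-1)$ and $Bv^{(i+5)}\in\mathcal C$ — one gets $v^{(i)}\ge 2^{5}B\,(2^{5}B)^{t-1}(2A)^{5}v^{(i+5t+5)}=(2^{5}B)^{t}(2A)^{5}v^{(i+5t+5)}$, which is $(27)$ for $t$.

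To prove the block inequality I would argue as follows. Five applications of $(13)$ already give $v^{(i)}\ge 2^{5}A^{5}v^{(i+5)}$, so it is enough to recover the extra term $2^{5}Pv^{(i+5)}$. By Lemma $9$, one of the first four of the five expansions produces a summand of index $\equiv 5\pmod 8$; by Lemma $10$, at the next expansion — the $p$-th of the block, $p\in\{2,3,4,5\}$, so $1\le p-1\le 4$ — a summand occurs that certainly lies in $A_{2}$ although the worst-case bookkeeping behind $(13)$ charged it to $A_{3}$. Re-running the proof of Lemma $4$ for the expansion of the class element that created this summand (a row-$2$ or row-$3$ expansion, according to how its parent was charged) and rebooking one unit from the $A_{3}$-column to the $A_{2}$-column — using $A_{1}\subset A_{2}$ in the row-$3$ case to rebalance — replaces the matrix $A$ at the $p$-th step by some $\widetilde A=A+e_{r}\delta$, with $e_{r}$ a coordinate column ($r\in\{2,3\}$) and $\delta$ a $1\times3$ row with nonnegative entries in positions $1,2$. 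Since $a^{j}_{2}\ge a^{j}_{3}$ we have $\widetilde A w\ge Aw$ for all $w\in\mathcal C$, hence
$$v^{(i)}\ \ge\ 2^{5}A^{\,p-1}\widetilde A\,A^{\,5-p}v^{(i+5)}\ =\ 2^{5}\bigl(A^{5}+A^{\,p-1}e_{r}\delta\,A^{\,5-p}\bigr)v^{(i+5)}.$$
It remains to check $A^{\,p-1}e_{r}\delta\,A^{\,5-p}w\ge Pw=(w_{2}-w_{3})(1,1,1)^{\top}$ for every $w\in\mathcal C$ and every admissible $(p,r)$. Writing the left side as $\mathrm{col}_{r}(A^{p-1})\cdot(\delta A^{5-p}w)$ and using that $\mathrm{col}_{r}(A^{p-1})$ has all entries $\ge 1$, this reduces to the scalar inequality $\delta A^{5-p}w\ge w_{2}-w_{3}$, which is verified case by case from the explicit matrices $A^{0},A^{1},A^{2},A^{3}$ and the relations $w_{1}\ge w_{2}\ge w_{3}\ge 1$ (with, in the few tight cases, the additional linear relations the triples $(a^{j}_{1},a^{j}_{2},a^{j}_{3})$ inherit from $(9)$). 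Adding the verified correction to $2^{5}A^{5}v^{(i+5)}$ gives $v^{(i)}\ge 2^{5}Bv^{(i+5)}$, closing the induction.

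\emph{The main obstacle.} Everything above is routine except that last verification. The hard part is to make the block inequality hold \emph{uniformly} in the position $p$ of the bonus step and in the improved row $r$: after the single-step gain $\delta$ has been conjugated by $p-1$ copies of $A$ on one side and $5-p$ on the other, it must still dominate the fixed perturbation $P$ in the sense required on $\mathcal C$. This is exactly the step forcing one to pin down which linear inequalities $(a^{j}_{1},a^{j}_{2},a^{j}_{3})$ obeys beyond plain monotonicity and to push through the explicit $3\times3$ arithmetic for $A,\dots,A^{4}$; the rest of the argument is formal.
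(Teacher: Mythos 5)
Your overall architecture coincides with the paper's: induction on $t$, base case $t=0$ by five applications of Lemma 4, and Lemmas 9--10 used to insert, at some step $j\in\{2,3,4,5\}$ of a five-step block, a bonus term built from the matrix $P$ with all rows equal to $(0,1,-1)$, yielding $v^{(i)}\ge 2^{5}\left(A^{5}+PA^{5-j}\right)\cdot(\cdots)$. The genuine gap is in how the unfavourable positions $j<5$ are compared with $j=5$. You try to prove the standalone block inequality $v^{(i)}\ge 2^{5}Bv^{(i+5)}$ and reduce it to the scalar inequality $\delta A^{5-p}w\ge w_{2}-w_{3}$ for every $w$ in the cone $w_{1}\ge w_{2}\ge w_{3}\ge 1$. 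That inequality is false for $p=4$: with $\delta=(0,1,-1)$ one has $\delta A\,w=(1,-1,0)w=w_{1}-w_{2}$, and $w=(2,2,1)$ gives $0\not\ge 1$; nothing in the definition $a^{j}_{i}=\min\{g_{k}\in A_{i}\mid k\in I_{j}\}$ with $A_{1}\subset A_{2}\subset A_{3}$ forbids $a_{1}=a_{2}>a_{3}$, so the extra relations you hope to extract from $(9)$ are not available. You correctly identify this verification as the main obstacle, but it is not a technical nuisance: the clean block inequality $v^{(i)}\ge 2^{5}Bv^{(i+5)}$ against an arbitrary admissible vector is exactly what the paper does \emph{not} assert, and your route cannot be completed as stated.

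The paper's proof sidesteps this by never detaching the correction from what stands to its right: it writes the inductive step as $v^{(i)}\ge 2^{5}\left(A^{5}+PA^{5-j}\right)(2^{5}B)^{t}(2A)^{5}v^{(i+5t+10)}$ and checks by finite enumeration over $j\in\{2,3,4,5\}$ the \emph{matrix} inequality $PA^{5-j}C\ge PC$ for the concrete matrices $C=B$ (when $t\ge1$) and $C=A^{5}$ (when $t=0$). For $j=4$ this amounts to $\mathrm{row}_{1}(C)+\mathrm{row}_{3}(C)\ge 2\,\mathrm{row}_{2}(C)$, which holds for $C=A^{5}$ and $C=B$ but fails for $C=I$ --- which is precisely why the statement retains the separate factor $(2A)^{5}$ at the bottom of $(27)$ instead of claiming $(2^{5}B)^{t+1}$: the guaranteed positive matrix to the right is needed to absorb the $j=4$ case. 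To repair your argument, replace the pointwise cone reduction by this comparison of the full matrix products, i.e.\ prove $\left(A^{5}+PA^{5-j}\right)C\ge BC$ for $C\in\{B,A^{5}\}$ and only then multiply by the remaining nonnegative factors and the final vector.
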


  \begin{proof}
  %Обозначим:
  %$$\begin{pmatrix}
  %0&1&-1
  %\end{pmatrix}=
  %\begin{pmatrix}
  %0&1&-1\\ \rule{0pt}{5mm}0&1&-1\\ \rule{0pt}{5mm}0&1&-1
  %\end{pmatrix}
  %$$
  При $t=0$ утверждение леммы следует из леммы 4, примененной 5 раз. Пусть лемма уже доказана для некоторого $t\ge 0$.

  Сформулируем полученные выше результаты в терминах матриц. Согласно леммам 9 и 10, для любого $i$ найдется $j\in\{2, 3, 4, 5\}$, такое что на $i+j$-ом шаге применения  формулы $(8)$ появится слагаемое, принадлежащее классу $A_2$, учтенное как элемент из $A_3$ (в терминах леммы 4). Другими словами,
  $$
  \begin{pmatrix}
  a^{i}_1\\ \rule{0pt}{5mm}a^{i}_2\\ \rule{0pt}{5mm}a^{i}_3
  \end{pmatrix}
  \ge 2^j \left({\begin{pmatrix} 2 & 1 & 1\\ \rule{0pt}{5mm} 2 & 0 & 1\\ \rule{0pt}{5mm} 1 & 1 & 1 \end{pmatrix}}^j + \begin{pmatrix} 0 & 1 & -1\\ \rule{0pt}{5mm} 0 & 1 & -1\\ \rule{0pt}{5mm} 0 & 1 & -1\end{pmatrix}\right)
  \begin{pmatrix}  a^{i+j}_1\\ \rule{0pt}{5mm}a^{i+j}_2\\ \rule{0pt}{5mm}a^{i+j}_3 \end{pmatrix},
  \eqno(28)
  $$
  Возьмем наименьшее из таких $j$.
  Для оценки вектора из правой части еще $5-j$ раз применим лемму 4, ввиду чего
  $$
  \begin{pmatrix}
  a^{i}_1\\ \rule{0pt}{5mm}a^{i}_2\\ \rule{0pt}{5mm}a^{i}_3
  \end{pmatrix}
  \ge 2^j \left({A}^j + \begin{pmatrix} 0 & 1 & -1\\ \rule{0pt}{5mm} 0 & 1 & -1\\ \rule{0pt}{5mm} 0 & 1 & -1\end{pmatrix}\right) (2A)^{5-j} \begin{pmatrix} a^{i+5}_1\\ \rule{0pt}{5mm}a^{i+5}_2\\ \rule{0pt}{5mm}a^{i+5}_3\end{pmatrix},
  \eqno(29)
  $$
  Докажем, что лемма верна для $t+1$. Применяя индуктивное предположение к вектору из правой части $(29)$, получим:
  $$
  \begin{pmatrix}
  a^{i}_1\\ \rule{0pt}{5mm}a^{i}_2\\ \rule{0pt}{5mm}a^{i}_3
  \end{pmatrix}
  \ge 2^5 \left({A}^5 + \begin{pmatrix} 0 & 1 & -1\\ \rule{0pt}{5mm} 0 & 1 & -1\\ \rule{0pt}{5mm} 0 & 1 & -1\end{pmatrix}{A}^{5-j}\right)
  (2^{5}B)^{t}(2A)^{5}\begin{pmatrix} a^{i+5t + 10}_1\\ \rule{0pt}{5mm}a^{i+5t + 10}_2\\ \rule{0pt}{5mm}a^{i+5t + 10}_3\end{pmatrix},
  \eqno(30)
  $$
  Лемма будет доказана, если показать, что минимум по $j\in\{2, 3, 4, 5\}$ произведения матриц в $(30)$ (в смысле определения 1) будет достигнут при $j=5$. Для доказательства этого факта оценим снизу произведение
  $$
  \begin{pmatrix} 0 & 1 & -1\\ \rule{0pt}{5mm} 0 & 1 & -1\\ \rule{0pt}{5mm} 0 & 1 & -1\end{pmatrix} {A}^{5-j}
  (2^{5} B)^{t}(2A)^{5}
  $$
  Для этого положим
  $$
  C=\left\{
  \begin{aligned}
  &B,\qquad &\text{при } t\ge 1,\\
  &A^5, \qquad &\text{при } t=0
  \end{aligned}
  \right.
  $$
  и проверим конечным перебором по $j\in\{2, 3, 4, 5\}$, что
  $$
  \begin{pmatrix} 0 & 1 & -1\\ \rule{0pt}{5mm} 0 & 1 & -1\\ \rule{0pt}{5mm} 0 & 1 & -1\end{pmatrix}{A}^{5-j}C\ge
  \begin{pmatrix} 0 & 1 & -1\\ \rule{0pt}{5mm} 0 & 1 & -1\\ \rule{0pt}{5mm} 0 & 1 & -1\end{pmatrix}C.
  $$
  Таким образом, минимум достигается при $j=5$.
  \par
  \begin{flushleft}
  Лемма доказана.
  \end{flushleft}
  \end{proof}

 Пусть $\lambda$ - наибольшее из действительных собственных значений матрицы ${2A}$, где $A$ определено в (26);
 $\mu$ - наибольшее из действительных собственных значений матрицы $B$ (8).
 Тогда ${2\sqrt[5]{\mu}- \lambda \ge 0.0000756}$ (что может быть проверено непосредственно на компьютере).
 %$\qquad\lambda(\bar{A})$ - наибольшее  действительное собственное значение матрицы ${\bar{A}}$.

 %Тогда $\lambda(\bar{A})\ge (\lambda(A))^7$.
 %\end{lemm}
 %\begin{proof}
 % Действительно, характеристический многочлен матрицы $\bar{a}$:
 % $$
 % -{\lambda}^3 + 23036{\lambda}^2 - 247384\lambda + 277224=0.
 % \eqno (23)$$
 % Из формулы (19): $\lambda(A)< \frac{167}{54}$. Возводя эту дробь в 7 степень и подставляя в (21),\smallskip %\begin{flushleft}
 %  получаем ${\lambda(\bar{A})> {(\frac{167}{54})}^8}$. Следовательно, $\lambda(\bar{A})\ge (\lambda(A))^8$.
 %  %\end{flushleft}
 %  \end{proof}
  %Обозначим за $\mu$ наибольшее из собственных значений матрицы $A^7 + \begin{pmatrix}0&1&0\end{pmatrix}$.
  %Непосредственное вычисление показывает, что
  Таким образом, используя леммы 9-11, для достаточно больших $m$ можно дать более точную оценку для $g_m$, чем та, что получена в теореме 1:
  $$
  g_m\gg m^{1+\frac{1}{7}\log_{2}\mu}.
  $$

  \end{proof}

  \begin{proof}[Доказательство теоремы 4.]
  $\text{ }$\par
  Будем рассматривать последовательности $(a_1,\ldots,a_n)$ произвольной длины $n$, для которых
  \begin{itemize}
  \item[$1.$] континуант $\langle a_1,\ldots,a_n\rangle=3^m$,где $m\in \mathbb{N}$,
  \item[$2.$] $a_i\le 3, i=1,\ldots, n$,
  \item[$3.$] $a_1, a_n=2$.
  \end{itemize}

  Для $m=1, 2, 3$ последовательностей, удовлетворяющих указанным выше условиям, не существует. Однако,
  $$
  \begin{aligned}
  &3^4=\langle2, 2, 1, 1, 1, 1, 2\rangle,\\
  &3^5=\langle2, 1, 1, 1, 3, 1, 2, 2\rangle,\\
  &3^6=\langle2, 3, 1, 3, 1, 2, 1, 1, 2\rangle,\\
  &3^7=\langle2, 1, 2, 2, 1, 2, 3, 2, 1, 2\rangle.
  \end{aligned}
  $$
  Таким образом, для $m\ge 8$ возможно применить вычисления по рекуррентной формуле:
  $$
  \begin{aligned}
  g_m=2g_{\left\lfloor\frac{m}{2}\right\rfloor}.%&g_m=2g_{\frac{m}{2}}, \text{ при } m \text { четном},\\
  %&g_m=2g_{\frac{m-1}{2}}, \text{ при } m \text { нечетном}.
  \end{aligned}
  \eqno(31)$$
  Тогда для $m\ge 8$ получаем следующую оценку:
  $$
  g_m\ge 2^{\left\lfloor log_2 \left(\frac{m+1}{8}\right)\right\rfloor}\ge \frac{m+1}{16}.
  $$
  Учитывая замечание 5, получаем утверждение теоремы.
  \end{proof}

  \begin{proof}[Доказательство теоремы 5]
  $\text{ }$\par
  Рассмотрим последовательность $(2, 1, 2)$, для которой ${\langle 2, 1, 2\rangle=2^{2^1 - 1}}$.
  Пусть ${g_{2^2-1}=1}$, для каждого ${k\ge 3}$ положим ${g_{2^k-1}=2g_{2^{k-1}-1}}$. Так как ${f_{2^2-1}\ge g_{2^2-1}}$, применяя лемму 2 с $b=2$ к указанной выше последовательности получим, что для каждого ${k\ge 3}$ ${f_{2^k-1}\ge g_{2^{k}-1}=2^{k}}$.

  \smallskip В итоге, с учетом замечания 5, получим ${f_{2^{2^k-1}}\ge2^{k}}$.
  \end{proof}

  \begin{proof}[Доказательство теоремы 6]
  $\text{ }$\par
  %\begin{itemize}
  %\item[\textit{(i)}]
  Покажем, что применением описанного метода в некоторых случаях невозможно получить более точную по порядку величины $m$ оценку, чем та, что доказана в теореме 1.
  \par(\textit{i}) Согласно описанию используемого метода $g_2=\ldots=g_{s+2}=1.$ В рекуррентную формулу для ${g_m, \text{ где } m>s+2}$ входит $\frac{s+1}{2}$ слагаемое. \\Таким образом, для $m>5s+5$ справедлива следующая оценка:
  $$
  \begin{aligned}
  &g_m=2\left(g_{\left\lfloor\frac{m}{2}\right\rfloor} + \ldots + g_{\left\lfloor\frac{m-s+1}{2}\right\rfloor}\right)\le2\frac{s+1}{2}\max_{v\le 2^{-1}m} g_v\le (s+1)^2\max_{v\le2^{-2}m} g_v \le\ldots\le\\&\le(s+1)^n\max_{v in \le 2^{-n}m} g_v \le m^{\log_2 (s+1)}.
  \end{aligned}
  $$
  При оценке было использовано, что интервалы $I_0\subset I_1\subset\ldots\subset I_n$ изменения индексов слагаемых, входящих в рекуррентную формулу для $g_m$ соответствуют (12). Также было использовано, что количество $n$ - применений формулы не может превышать $\log_2 m$.
  %\item[\textit{ii}]
  \par\medskip
  (\textit{ii})
  Рассмотрим числа $g_m$, введенные при описании метода, такие что $m=2^k-1, k\in \mathbb{N}$, а также числа ${b^i_j=\max \{\; g_l\in A_i,\quad | \quad l\in \{2^{k-1-j}-1, 2^{k-1-j}-3\}\;\}}$. Напомним, что ${A_i\text{ для } i=1, 2 \text{ определены в (21)}}.$%В этом случае
  %для вычислений нельзя подобрать матрицу размера $2\times2$, вычисления по которой дали бы
  %невозможно получить лучшую оценку, чем та, что указана
  %получена с использованием матрицы $A$ (20).
  \\При $k\ge \log_2 {5s+6}$ применим рекуррентную формулу вычисления $g_m$ (9). Получим:
  $$
  \begin{aligned}
  &g_{2^k-1}=2\left(g_{2^{k-1}-1}+g_{2^{k-1}-2}\right)= 2(1, 1)\begin{pmatrix}b^0_1\\b^0_2\end{pmatrix}=4(1, 1)\begin{pmatrix}g_{2^{k-2}-1}+g_{2^{k-2}-2}+ g_{2^{k-2}-3}\\g_{2^{k-2}-1}+g_{2^{k-2}-2}\end{pmatrix}
  \le\\
  &\le 4(1, 1)\begin{pmatrix}2\max(g_{2^{k-2}-1}, g_{2^{k-2}-3})+g_{2^{k-2}-2}\\ \max(g_{2^{k-2}-1}, g_{2^{k-2}-3})+g_{2^{k-2}-2}\end{pmatrix}=4(1, 1)A\begin{pmatrix}b^1_1\\b^1_2\end{pmatrix}\le\\
  &\le 8(1, 1)A\begin{pmatrix}2\max (g_{2^{k-3}-1}, g_{2^{k-3}-3})+g_{2^{k-3}-2}\\ \max(g_{2^{k-3}-1}, g_{2^{k-3}-3})+g_{2^{k-3}-2}\end{pmatrix}=8(1, 1)A^2\begin{pmatrix}b^2_1\\b^2_2\end{pmatrix}\le\ldots\le\\
  &\le 2^{k-3}(1, 1)A^{k-5}\begin{pmatrix}2\max (g_{2^{3}-1}, g_{2^{3}-3})+g_{2^{3}-2}\\ \max(g_{2^{3}-1}, g_{2^{3}-3})+g_{2^{3}-2}\end{pmatrix}\le 2^{k-3}(1, 1)A^{k-5}\begin{pmatrix}2g^{7} + g_6 \\g^7 + g_6\end{pmatrix}=\\
  &=2^{k-3}(1, 1)A^{k-4}\begin{pmatrix}g_6 \\g_7\end{pmatrix}=2^{k-3}(1, 1)A^{k-4}\begin{pmatrix}1 \\2\end{pmatrix}
  %\ll\max((2A)^{k-4})\le
  \ll {\lambda}^{\log_2 (m+1)}
  %\asymp(m+1)^{\log_2 {\lambda})}
  \ll m^{\log_2 {\lambda}},
  %  , 1)4\left(g_{2^{k-2}-1}+g_{2^{k-2}-2}+ g_{2^{k-2}-3}\right)\le 4(1, 1)A\begin{pmatrix}b^0_1\\b^0_2\end{pmatrix}=
  \end{aligned}
  $$
  Оценка сверху  степени матрицы получена стандартным образом с помощью приведения матрицы $A$ к жордановой форме.
  Таким образом, при вычислении $g_m$ указанного вида невозможно с помощью нашего метода получить оценку снизу лучше заявленной в теореме 1.

  (\textit{iii})
  В случае $s=2$ для $m$ - четного в рекуррентной формуле (9) для $g_m$ может быть два
  слагаемых,
   а для нечетного $m$ - одно слагаемое.
   Следовательно, для ~$m\ge 23, \text{ такого что } m=2^k-1, \text{ где } k\in\mathbb{N}$ получаем:
  $$
  \begin{aligned}
  &g_{2^k-1}=2g_{2^{k-1}-1}=4g_{2^{k-2}-1}=\ldots=2^{k-4}g_{15}=2^{k-3}=\frac{1}{8}2^{\log_2 (m+1)}=\frac{1}{8} (m+1).
  \end{aligned}
  $$
  Таким образом, утверждение теоремы доказано.

    %\end{itemize}
  \end{proof}

\newpage


\begin{thebibliography}{99}
\bibitem{hensley} Doug Hensley {\slshape Continued fractions} World Scientific Publishing Co.Pte. Ltd., 2006
\bibitem{Nie}H. Niederreiter, {\slshape Dyadic fractions with small partial quotients}, Mh. Math.
101 (1986), 309-315.
\bibitem{jap1}Monrudee Yodphotong and Vichian Laohakosol, {\slshape Proofs on Zaremba's Conjecture for Powers of 6}, Proceedings of the International Conference on Algebra and Its Applications 2002, 278-282.
\bibitem{jap2} Takao Komatsu {\slshape On a Zaremba's Conjecture For Powers}, Saraevo Journal of Mathematics, Vol.1(13). 2005, 9-13.

%\bibitem{german}О.\,Н.\,Герман, Е.\,Л.\,Лакштанов. {\slshapeО многомерном обобщении теоремы Лагранжа для цепных дробей.} Изв. РАН. Сер. матем., 2008, 72:1, 51-66
%\bibitem{moussafir}J-\,O.\,Moussafir. {\slshape Convex hulls of integral points.} Zapiski nauch. sem. POMI, 256 (2000)
%
\end{thebibliography}
\end{document}